\pdfoutput=1\relax
\documentclass[11pt, english]{article}
\usepackage{meta/preamble}
\usepackage{mathtools}

\title{Condensed Brown Comenetz Duality}
\date{\today}
\author{Roey Hel-Or, Amos Kaminski\thanks{Department of Mathematics, Weizmann Institute of Science \\ roey.hel-oer@weizmann.ac.il \\
amos.kaminski@weizmann.ac.il}}
\begin{document}

\setlength{\parindent}{0pt}

\maketitle
\begin{center}
\end{center}
\begin{abstract}
We construct an involutive duality on condensed spectra with locally compact homotopy groups, extending classical Pontryagin duality. We prove that dualization induces Pontryagin duality on homotopy groups, with degrees reversed, and that the canonical biduality map is an equivalence. In particular, every ordinary spectrum, viewed as a discrete condensed spectrum, satisfies biduality, without any finiteness assumptions on its homotopy groups.
\end{abstract}

\setcounter{tocdepth}{2}
$\\$
\begin{figure}[H]
  \centering{}
  \setlength{\fboxsep}{-5pt}
  \setlength{\fboxsep}{5pt}
  \frame{\includegraphics[scale=0.14]{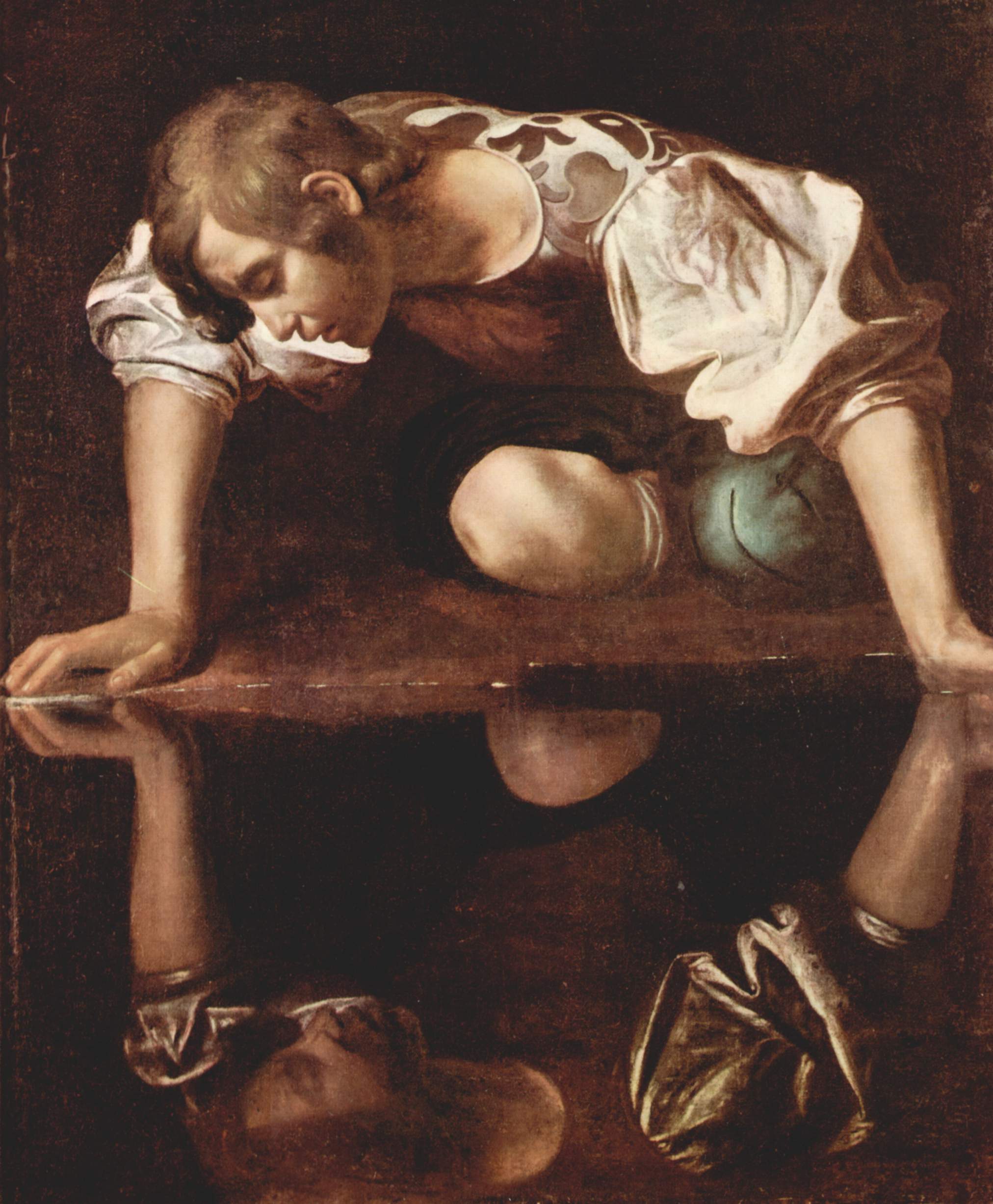}}
  \caption{\footnotesize 
le Narcisse du Caravage\\}
    
\end{figure}
\newpage
\begingroup
\renewcommand{\baselinestretch}{1.2}\selectfont
\setlength{\parskip}{6pt}
\tableofcontents
\endgroup
\newpage
\section{Introduction}

This result addresses a difficulty already identified by Brown and Comenetz
at the outset of their theory. In their 1974 account, they explicitly set
aside the natural topology on the character groups and explained that their
inability to incorporate it led to finiteness assumptions in most of their
results \cite[Introduction]{BC74}. The issue was already present with
$\mathbb R/\mathbb Z$ as the character group: its topology was forgotten in
passing to spectra. Condensed spectra allow us to retain this topology and
establish biduality for locally compact homotopy groups. 

\subsection{Motivation and statement of the main result}

Pontryagin duality and Brown--Comenetz duality are two instances of the same
principle: an object can sometimes be recovered from its characters into a
suitable dualizing object.  The purpose of this paper is to construct a common
generalization of these two dualities in the category of condensed spectra.

Let \(A\) be a locally compact abelian group.  Its Pontryagin dual is
\[
\widehat{A}
=
\Hom_{\mathrm{cont}}
\left(A,\mathbb{T}\right),
\]
endowed with the compact-open topology, where $\mathbb{T}=\mathbb{R}/\mathbb{Z}$ is the circle group. The Pontryagin duality
theorem states that \(\widehat{A}\) is again locally compact and that the evaluation
map
\[
d_A:A\to
\widehat{\widehat{A}}
\]
is an isomorphism.  Moreover, Pontryagin duality exchanges compact and
discrete abelian groups.  We use the formulation in
\cite[Theorem~4.1(ii)--(iii)]{ScholzeCondensed2026}.  If topological abelian groups
are regarded as condensed abelian groups, then the condensed internal Hom
recovers the compact-open topology: for a compactly generated Hausdorff
topological abelian group \(A\) and a Hausdorff topological abelian group \(B\),
there is a natural isomorphism
\[
\uHom_{\Cond(\Ab)}(A,B)
\cong
\Hom_{\mathrm{cont}}(A,B)
\]
of condensed abelian groups, where the right-hand side is the condensed abelian group associated with the
compact-open topology
\cite[Proposition~4.2]{ScholzeCondensed2026}. Thus, Pontryagin duality can be written
intrinsically in condensed mathematics as
\[
\widehat{A}
=
\uHom_{\Cond(\Ab)}
\left(A,\mathbb{T}\right).
\]

For finite abelian groups, Pontryagin duality reduces to the usual character duality
\[
A\mapsto\Hom(A,\mathbb{Q}/\mathbb{Z}),
\]
since every character
has finite image and \(\mathbb{T}_{\mathrm{tors}}\cong\mathbb{Q}/\mathbb{Z}\).

\medskip

There is a parallel construction in stable homotopy theory.  Since
\(\mathbb{Q}/\mathbb{Z}\) is injective in \(\Ab\), it admits an
injective lift, relative to the standard \(t\)-structure on spectra, to a spectrum
\(I_{\mathbb{Q}/\mathbb{Z}}\).  In Lurie's terminology, this follows from the
classification of injective objects in a stable category with a \(t\)-structure
\cite[Definition~C.5.7.2, Proposition~C.5.7.3, and
Theorem~C.5.7.4]{SAG}.

We write
\[
    \widehat{X}
    :=
    \uHom_{\Sp}
    \left(X,I_{\mathbb{Q}/\mathbb{Z}}\right)
\]
for the Brown--Comenetz dual of a spectrum \(X\).  The resulting spectrum
satisfies
\[
\pi_n\widehat{X}
\cong
\uHom_{\Ab}
\left(\pi_{-n}X,\mathbb{Q}/\mathbb{Z}\right)
\]
naturally in every spectrum \(X\) and every integer \(n\).  If all homotopy
groups of \(X\) are finite, the evaluation map
\[
X\longrightarrow\widehat{\widehat{X}}
\]
is an equivalence; this is the Brown--Comenetz duality theorem
\cite[Theorem~7.4.1]{RavenelNilpotence}.

Passing to condensed spectra allows us to remove this finiteness hypothesis. For the duality constructed below, every ordinary spectrum, regarded as a discrete condensed spectrum, is canonically equivalent to its double dual. Its homotopy groups may be arbitrary abelian groups: their discrete topology makes them locally compact, while their duals carry the compact topology supplied by Pontryagin duality.
We seek a dualizing spectrum in
\[
\Cond(\Sp)
\]
whose effect on homotopy groups is Pontryagin duality.  Let
\[
\Cond(\Sp)_{\mathrm{lc}}
\subseteq
\Cond(\Sp)
\]
denote the full subcategory of condensed spectra \(X\) such that
\(\pi_nX\) is a locally compact abelian group for every \(n\in\mathbb{Z}\).
The desired object \(I_{\mathbb{T}}\) should satisfy
\[
\pi_n
\uHom_{\Cond(\Sp)}
\left(X,I_{\mathbb{T}}\right)
\cong
\uHom_{\Cond(\Ab)}
\left(\pi_{-n}X,\mathbb{T}\right)
\]
for every \(X\in\Cond(\Sp)_{\mathrm{lc}}\).
This would simultaneously refine the Brown--Comenetz formula in the topological
direction and the Pontryagin duality formula in the stable direction.

There is an immediate obstruction to constructing
\(I_{\mathbb{T}}\) as an injective lift: the condensed circle
\(\mathbb{T}\) is not injective in \(\Cond(\Ab)\).  Nevertheless,
this failure of injectivity disappears when testing against locally compact sources.  More
precisely, Hoffmann and Spitzweck prove that, for every locally compact abelian
group \(A\),
\[
\Ext^{q}_{\operatorname{LCA}}
\left(A,\mathbb{T}\right)=0
\qquad (q\geq 1)
\]
\cite[Proposition~4.14(vii)]{HoffmannSpitzweck2007}.  The comparison functor
\[
D^{b}\left(\operatorname{LCA}\right)
\longrightarrow
D\left(\Cond(\Ab)\right)
\]
is fully faithful
\cite[Corollary~4.9]{ScholzeCondensed2026}, and the proof identifies the
corresponding derived internal Hom objects
\cite[Corollary~4.9, proof]{ScholzeCondensed2026}.  It is in this sense that, although
\(\mathbb{T}\) is not injective in \(\Cond(\Ab)\), it behaves as an
injective object when tested against locally compact sources.

We therefore construct the desired spectrum by gluing the ordinary
Brown--Comenetz spectrum to the Eilenberg--Mac Lane spectrum of the circle.
\medskip

Let
\[
\iota:
\left(\mathbb{Q}/\mathbb{Z}\right)^{\mathrm{disc}}
\longrightarrow
\mathbb{T}
\]
be the natural inclusion, and let
\[
h^{\mathrm{disc}}:
H\left(\left(\mathbb{Q}/\mathbb{Z}\right)^{\mathrm{disc}}\right)
\longrightarrow
I_{\mathbb{Q}/\mathbb{Z}}^{\mathrm{disc}}
\]
be the discrete condensation of the map corresponding to
\[
\id_{\mathbb{Q}/\mathbb{Z}}
\in
\End_{\Ab}
\left(\mathbb{Q}/\mathbb{Z}\right)
\]
under the Brown--Comenetz universal property.  We define
\[
    I_{\mathbb{T}}\in\Cond(\Sp)
\]
by the following pushout square:
\[\begin{tikzcd}
	{H\left(
	    \left(\mathbb{Q}/\mathbb{Z}\right)^{\mathrm{disc}}
	\right)} & {H\mathbb{T}} \\
	{I_{\mathbb{Q}/\mathbb{Z}}^{\mathrm{disc}}} & {I_{\mathbb{T}}.}
	\arrow["{H\iota}", from=1-1, to=1-2]
	\arrow["{h^{\mathrm{disc}}}"', from=1-1, to=2-1]
	\arrow[from=1-2, to=2-2]
	\arrow[from=2-1, to=2-2]
	\arrow["\lrcorner"{anchor=center, pos=0.125, rotate=180}, draw=none, from=2-2, to=1-1]
\end{tikzcd}\]

The two terms on the right and lower left have complementary virtues:
\(H\mathbb{T}\) has the desired degree-zero homotopy group,
whereas \(I_{\mathbb{Q}/\mathbb{Z}}^{\mathrm{disc}}\) has the desired stable
character-theoretic universal property.  The pushout joins them along their common
discrete torsion part.

For
\(X\in\Cond(\Sp)_{\mathrm{lc}}\), define its
Brown--Comenetz--Pontryagin dual by
\[
    \widehat{X}
    :=
    \underline{\Hom}_{\Cond(\Sp)}
    \left(X,I_{\mathbb{T}}\right).
\]

The main result is the following.

\begin{theorem*}[Condensed Brown--Comenetz--Pontryagin duality]
\label{thm:introduction-main}
With \(I_{\mathbb T}\) and \(\widehat{(-)}\) as above, the following hold:

\begin{enumerate}[label=\textup{(\roman*)}]
\item
The spectrum \(I_{\mathbb{T}}\) is coconnective, and there is a canonical
isomorphism
\[
\pi_0 I_{\mathbb{T}}\cong\mathbb{T}.
\]

\item
For every
\(X\in\Cond(\Sp)_{\mathrm{lc}}\) and \(n\in\mathbb{Z}\), there is a natural
isomorphism
\[
\pi_n\widehat{X}
\xrightarrow{\ \sim\ }
\widehat{\pi_{-n}X}.
\]

\item
The canonical evaluation map
\[
d_X:X\longrightarrow\widehat{\widehat{X}}
\]
is an equivalence for every
\(X\in\Cond(\Sp)_{\mathrm{lc}}\). Consequently,
\[
\widehat{(-)}:
\Cond(\Sp)_{\mathrm{lc}}^{\mathrm{op}}
\xrightarrow{\ \sim\ }
\Cond(\Sp)_{\mathrm{lc}}
\]
is a contravariant equivalence whose quasi-inverse is itself.
\end{enumerate}
\end{theorem*}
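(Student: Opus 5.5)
We would argue in three stages matching (i)--(iii), with the computation of mapping spectra into \(I_{\mathbb T}\) out of Eilenberg--Mac Lane objects as the technical core.

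\textbf{Stage (i).} Apply \(\pi_*\) to the pushout square, i.e.\ to the cofiber sequence
\[
H\bigl((\mathbb Q/\mathbb Z)^{\mathrm{disc}}\bigr)\to H\mathbb T\oplus I^{\mathrm{disc}}_{\mathbb Q/\mathbb Z}\to I_{\mathbb T}.
\]
Discrete condensation is \(t\)-exact, so \(I^{\mathrm{disc}}_{\mathbb Q/\mathbb Z}\) is coconnective with \(\pi_0=(\mathbb Q/\mathbb Z)^{\mathrm{disc}}\), and \(h^{\mathrm{disc}}\) induces the identity on \(\pi_0\). The long exact sequence in degree \(0\) reads
\[
0\to\mathbb Q/\mathbb Z\xrightarrow{(\iota,\mathrm{id})}\mathbb T\oplus\mathbb Q/\mathbb Z\to\pi_0I_{\mathbb T}\to 0.
\]
Injectivity of \((\iota,\mathrm{id})\) gives the vanishing of \(\pi_1\), and the cokernel is \(\mathbb T\) via \((t,q)\mapsto t-\iota q\). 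In negative degrees \(\pi_nI_{\mathbb T}\cong\pi_nI^{\mathrm{disc}}_{\mathbb Q/\mathbb Z}\), and these are zero for \(n>0\). Hence \(I_{\mathbb T}\) is coconnective with \(\pi_0I_{\mathbb T}\cong\mathbb T\).

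\textbf{Stage (ii), key lemma.} For a locally compact \(A\), we claim
\[
\uHom_{\Cond(\Sp)}(HA,I_{\mathbb T})\simeq H\widehat A
\quad\text{(concentrated in degree \(0\)).}
\]
Mapping \(HA\) into the pushout gives a pushout of mapping spectra, and we compute each of the three terms.
\begin{itemize}
\item \(\uHom(HA,H\mathbb T)=R\uHom_{\Cond(\Ab)}(A,\mathbb T)\). This equals \(\widehat A\) in degree \(0\) by Hoffmann--Spitzweck's vanishing \(\Ext^{q\geq1}_{\mathrm{LCA}}(A,\mathbb T)=0\) together with the full faithfulness and internal-Hom comparison of \cite[Corollary~4.9]{ScholzeCondensed2026}.
\item \(\uHom(HA,H(\mathbb Q/\mathbb Z)^{\mathrm{disc}})\) is \(R\uHom(A,\mathbb Q/\mathbb Z)\).
\item \(\uHom(HA,I^{\mathrm{disc}}_{\mathbb Q/\mathbb Z})\) is the term to control.
\end{itemize}

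The cleanest route is to show that the cofiber \(F\) of \(h^{\mathrm{disc}}\), which is the discrete condensation of \(\tau_{<0}I_{\mathbb Q/\mathbb Z}\) shifted, satisfies \(\uHom(HA,F)\simeq\uHom(HA,\Sigma H(\mathbb R)\text{-type term})\). Concretely, the pushout gives a cofiber sequence \(H\mathbb T\to I_{\mathbb T}\to F\), so it suffices to prove \(\uHom(HA,F)=0\) for \(A\) locally compact.

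The obstacle is that \(F\) is discrete but has huge homotopy groups (the negative homotopy of \(I_{\mathbb Q/\mathbb Z}\)). Its vanishing against a discrete \(A\) follows from the universal property of \(I_{\mathbb Q/\mathbb Z}\), since \(\pi_0\) of the map is an isomorphism and \(\Ext^{>0}(A,\mathbb Q/\mathbb Z)=0\); we then use that discrete condensation commutes with \(\Hom\) from discrete sources. For general \(A\) we plan to use the structure theorem: \(A\) is an extension involving \(\mathbb R^n\), a compact group, and a discrete group. For \(\mathbb R\) and for compact (profinite-by-torus) sources, we would show that maps into discrete condensed spectra factor through \(\pi_0\) of the source as a condensed set, and use solidity/connectedness arguments. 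For \(\mathbb R\), the identity \(R\uHom(\mathbb R,\text{discrete})=0\) follows from \cite{ScholzeCondensed2026}. Profinite groups are handled as limits of finite ones, where Brown--Comenetz applies directly.

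This is the step we expect to be hardest. We would first try to reduce to \(A\in\{\mathbb Z,\mathbb R,\mathbb T,\text{finite},\text{profinite}\}\) via dévissage, which is legitimate since both sides are exact in \(A\).

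\textbf{Stage (ii), general \(X\).} Granting the lemma, \(\uHom(-,I_{\mathbb T})\) sends \(\Sigma^kHA\) to \(\Sigma^{-k}H\widehat A\). We then pass to Postnikov towers: for \(X\in\Cond(\Sp)_{\mathrm{lc}}\), write \(X=\lim\tau_{\le m}X\) and \(X=\operatorname{colim}\tau_{\ge m}X\). Because each layer dualizes to a single homotopy group in the mirror degree, the dual of \(\tau_{\ge m}\tau_{\le m'}X\) has homotopy \(\widehat{\pi_{-n}X}\), and the connecting maps vanish for degree reasons. For unbounded \(X\), we use \(\uHom(\operatorname{colim}\tau_{\geq m}X,I)=\lim\) and show the \(\lim^1\) terms vanish because the tower is eventually constant on each \(\pi_n\). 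This gives (ii) naturally.

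\textbf{Stage (iii).} The evaluation \(d_X\) induces, under (ii) applied twice, the Pontryagin evaluation \(d_{\pi_nX}\) on \(\pi_n\). We would check this compatibility on Eilenberg--Mac Lane objects, where it reduces to the identification of the lemma. That map is an isomorphism by Pontryagin duality, and \(\widehat{\pi_nX}\) is again locally compact, so \(\widehat X\in\Cond(\Sp)_{\mathrm{lc}}\). Since \(\pi_*\) detects equivalences in \(\Cond(\Sp)\), \(d_X\) is an equivalence, and the contravariant self-inverse equivalence follows formally.
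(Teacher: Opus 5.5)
The gap is in the key lemma of Stage (ii). Your Stage (i), your Postnikov reduction, and your passage from homotopy groups to biduality in (iii) are sound, but the lemma rests on a false computation.

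\textbf{Where it fails.} You identify $\uHom_{\Cond(\Sp)}(HA,H\mathbb{T})$ with $\RHom_{\Cond(\Ab)}(A,\mathbb{T})$, and $\uHom_{\Cond(\Sp)}(HA,H((\mathbb{Q}/\mathbb{Z})^{\mathrm{disc}}))$ with $\RHom_{\Cond(\Ab)}(A,\mathbb{Q}/\mathbb{Z})$. But $\Cond(\Sp)$ is linear over $\mathbb{S}$, not over $\mathbb{Z}$. Mapping spectra between Eilenberg--Mac Lane objects are computed through $H\mathbb{Z}\otimes_{\mathbb{S}}H\mathbb{Z}$, so they see stable cohomology operations.

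Already for $A=\mathbb{Z}^{\mathrm{disc}}$, on global sections,
$\pi_{-2}\uHom_{\Sp}(H\mathbb{Z},H(\mathbb{Q}/\mathbb{Z}))=H^2(H\mathbb{Z};\mathbb{Q}/\mathbb{Z})\cong H^3(H\mathbb{Z};\mathbb{Z})\cong\mathbb{Z}/2$, generated by $\beta Sq^2$. By contrast, $\Ext^2(\mathbb{Z},\mathbb{Q}/\mathbb{Z})=0$.

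The same class breaks your first bullet. $H\mathbb{T}$ differs from $H((\mathbb{Q}/\mathbb{Z})^{\mathrm{disc}})$ by $HV$, where $V=\mathbb{T}/(\mathbb{Q}/\mathbb{Z})^{\mathrm{disc}}$ is rational and contributes nothing in degrees $-1,-2$. Hence $\uHom_{\Cond(\Sp)}(H\mathbb{Z}^{\mathrm{disc}},H\mathbb{T})\not\simeq H\mathbb{T}$.

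Your reduction to $\uHom(HA,F)=0$ fails for the same reason. Since $\uHom_{\Sp}(H\mathbb{Z},I_{\mathbb{Q}/\mathbb{Z}})\simeq H(\mathbb{Q}/\mathbb{Z})$, the cofiber sequence $H(\mathbb{Q}/\mathbb{Z})\to I_{\mathbb{Q}/\mathbb{Z}}\to\tau_{\le-1}I_{\mathbb{Q}/\mathbb{Z}}$ gives $\pi_{-1}\uHom_{\Sp}(H\mathbb{Z},\tau_{\le-1}I_{\mathbb{Q}/\mathbb{Z}})\cong H^2(H\mathbb{Z};\mathbb{Q}/\mathbb{Z})\neq0$. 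Your argument for discrete $A$ transfers the universal property of $I_{\mathbb{Q}/\mathbb{Z}}$, which controls maps \emph{into} $I_{\mathbb{Q}/\mathbb{Z}}$, to $H(\mathbb{Q}/\mathbb{Z})$, which has no such property.

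The two errors cancel, which is why $\widehat{HA}\simeq H\widehat A$ is still true. But no d\'evissage into $\mathbb{R}$, compact and discrete pieces can prove a false intermediate statement. As a sanity check: if your first bullet held, $H\mathbb{T}$ alone would already be dualizing and gluing in $I^{\mathrm{disc}}_{\mathbb{Q}/\mathbb{Z}}$ would be pointless. Classically, likewise, $H(\mathbb{Q}/\mathbb{Z})$ is not the Brown--Comenetz spectrum.

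\textbf{The missing idea.} Route everything through the coinduction $R=\uHom_{\Cond(\Sp)}(H(\mathbb{Z}^{\mathrm{disc}}),-)$, right adjoint to the forgetful functor $U\colon D(\Cond(\Ab))\to\Cond(\Sp)$. Use $\uHom_{\Cond(\Sp)}(HA,Y)\simeq U\,\uHom_{D(\Cond(\Ab))}(A,R(Y))$, and prove $R(I_{\mathbb T})\simeq H\mathbb{T}$ in $D(\Cond(\Ab))$.

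For that, use the \emph{other} cofiber sequence $I^{\mathrm{disc}}_{\mathbb{Q}/\mathbb{Z}}\to I_{\mathbb T}\to HV$:
\begin{itemize}
\item Sectionwise, the Brown--Comenetz universal property gives $R(I^{\mathrm{disc}}_{\mathbb{Q}/\mathbb{Z}})\simeq H((\mathbb{Q}/\mathbb{Z})^{\mathrm{disc}})$. This is exactly where the torsion coming from $H\mathbb{Z}\otimes_{\mathbb{S}}H\mathbb{Z}$ is absorbed.
\item $R(HV)\simeq HV$, because $V$ is rational and $\mathbb{S}\to H\mathbb{Z}$ is a rational equivalence.
\end{itemize}
Only after this reduction to $D(\Cond(\Ab))$ does the Ext-vanishing $\RHom_{\Cond(\Ab)}(A,\mathbb{T})\simeq\widehat A[0]$ give $\widehat{HA}\simeq H\widehat A$.

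The rest of your plan then works essentially as in the paper, with one small correction. In (iii), $\pi_n(d_X)$ corresponds to $(-1)^n d_{\pi_nX}$ rather than $d_{\pi_nX}$, because of a Koszul sign; this does not affect the conclusion.
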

The theorem can be read in two ways.  It is a stable refinement of Pontryagin
duality because its effect on every homotopy group is the compact-open Pontryagin
dual.  It is also a locally compact refinement of Brown--Comenetz duality because
its homotopy formula has exactly the same form as the classical Brown--Comenetz
formula, with \(\mathbb{Q}/\mathbb{Z}\) replaced by the condensed circle
\(\mathbb{T}\).
\paragraph{Related work}
During the final preparation of this paper, we became aware of forthcoming work by Thomas Nikolaus and Phil Pützstück on condensed Anderson duality and reflexivity. The duality theorem for condensed spectra with locally compact homotopy groups proved here is also established in their work, with a different proof. The two projects were developed independently.
\subsection*{Acknowledgments}
We are deeply grateful to our advisor, Shachar Carmeli, for his guidance and support, and to Lior Yanovski for suggesting this research project. We thank Akhil Mathew and Dustin Clausen for insightful discussions on condensed mathematics. We also thank the members of our research group for their support and for the many conversations through which the ideas in this paper took shape.
Finally, we acknowledge the use of ChatGPT (OpenAI) and Gemini (Google) for assistance in exploring ideas and improving the clarity of the exposition. We have independently verified all results and proofs and take full responsibility for the content.

\subsection*{Funding}
This work was partially supported by BSF Grant No. 2024766 and by the Azrieli Foundation.

\section{Injective objects and injective lifts}

\begin{notation}

For an abelian category \(\mathcal A\), we use the notation
\[
    [X,Y]_{\mathcal A}:=\uHom_{\mathcal A}(X,Y).
\]
For a stable \(\infty\)-category \(\mathcal C\), we will use
\[
    [X,Y]_{\mathcal C}
    :=
    \Hom_{h\mathcal C}(X,Y)
    =
    \pi_0\Map_{\mathcal C}(X,Y).
\]
In either case, \([X,Y]\) denotes the abelian group of morphisms from \(X\) to \(Y\).

\end{notation}

We begin by recalling the ordinary notion of injectivity in an abelian
category and its analogue for a stable \(\infty\)-category equipped with
a \(t\)-structure.

\begin{definition}[Injective objects in an abelian category]
Let \(\A\) be an abelian category. An object \(J\in\A\) is called
\emph{injective} if the contravariant functor
\[
    [-,J]_\A\colon \A^{\mathrm{op}}\longrightarrow \Ab
\]
is exact.

We denote by
\[
    \operatorname{Inj}(\A)\subseteq \A
\]
the full subcategory spanned by the injective objects.
\end{definition}

In a stable \(\infty\)-category, the preceding definition cannot be
used without additional structure. Indeed, for every object \(Q\) in a
stable \(\infty\)-category \(\C\), the representable functor
\[
    \Map_{\C}(-,Q)\colon \C^{\mathrm{op}}\longrightarrow
    \mathrm{Spaces}
\]
carries cofiber sequences to fiber sequences. Consequently, exactness
of a representable functor does not distinguish a special class of
objects. The appropriate notion of injectivity therefore depends on a
chosen \(t\)-structure.
We will first recall the notion of the heart of a $t$-structure
\begin{definition}[The heart of a \(t\)-structure]
Let \(\C\) be a stable \(\infty\)-category equipped with a
\(t\)-structure
\[
    \bigl(\C_{\geq 0},\C_{\leq 0}\bigr)
\]
in the sense of
\cite[Definition~1.2.1.4]{HA}. Its \emph{heart} is the full
subcategory
\[
    \C^{\heartsuit}
    :=
    \C_{\geq 0}\cap\C_{\leq 0}.
\]

The truncation functors determine a functor
\[
    \pi_0\colon \C\longrightarrow\C^{\heartsuit}
\]
given by either of the canonically equivalent composites
\[
    \pi_0
    :=
    \tau_{\leq 0}\tau_{\geq 0}
    \simeq
    \tau_{\geq 0}\tau_{\leq 0}.
\]
More generally, using the homological indexing convention, we define
\[
    \pi_n(X)
    :=
    \pi_0\bigl(\Sigma^{-n}X\bigr)
    \qquad
    (n\in\mathbb{Z}).
\]

The category \(\C^{\heartsuit}\) is an ordinary abelian category; see
\cite[Definition~1.2.1.11 and Remark~1.2.1.12]{HA}.
\end{definition}
We will now define the notion of injective objects relative to a $t$-structure
\begin{definition}[\(t\)-injective objects]
An object \(Q\in\C\) is called \emph{\(t\)-injective} if
\[
    Q\in\C_{\leq 0}
\]
and
\[
    [X,\Sigma Q]_{\C}=0
\]
for every \(X\in\C_{\leq 0}\).

We denote by
\[
    \operatorname{Inj}_{t}(\C)\subseteq\C
\]
the full subcategory spanned by the \(t\)-injective objects.

Lurie calls these objects simply injective; the expression
\(t\)-injective emphasizes their dependence on the chosen
\(t\)-structure. See
\cite[Definition~C.5.7.2]{SAG}.
\end{definition}

The preceding vanishing condition has the following useful universal
reformulation.

\begin{proposition}[Lurie]
\label{prop:characterization-t-injective}
For \(X, Q\in\C\), denote by
\[
    \rho_{X,Q}: [X,Q]_\C
    \longrightarrow
    \bigl[\pi_0X,\pi_0Q\bigr]_{\C^{\heartsuit}},
\]
the canonical map of abelian groups induced by $\pi_0$. Then the following conditions are equivalent:
\begin{enumerate}[\textup{(1)}]
    \item
    The object \(Q\) is \(t\)-injective.

    \item
    $\rho_{X,Q}$ is an isomorphism for every \(X\in\C\)
\end{enumerate}
\end{proposition}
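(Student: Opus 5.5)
The plan is to prove both implications directly from the long exact sequences attached to the truncation cofiber sequences, using only the axioms of a \(t\)-structure. Throughout I use two standard facts: \([A,B]_\C=0\) whenever \(A\in\C_{\geq 1}\) and \(B\in\C_{\leq 0}\), and the adjunctions \([X,Q]_\C\cong[\tau_{\leq 0}X,Q]_\C\) for \(Q\in\C_{\leq 0}\) and \([P,Q]_\C\cong[P,\tau_{\geq 0}Q]_\C\) for \(P\in\C_{\geq 0}\).

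\emph{(1)\(\Rightarrow\)(2).} Let \(Q\) be \(t\)-injective and \(X\in\C\). Since \(Q\in\C_{\leq 0}\), restriction along \(X\to\tau_{\leq 0}X\) identifies \([X,Q]_\C\) with \([\tau_{\leq 0}X,Q]_\C\). Next I apply \([-,Q]_\C\) to the cofiber sequence
\[
\pi_0X\simeq\tau_{\geq 0}\tau_{\leq 0}X\longrightarrow\tau_{\leq 0}X\longrightarrow\tau_{\leq -1}X .
\]
This gives an exact sequence
\[
[\tau_{\leq -1}X,Q]_\C\to[\tau_{\leq 0}X,Q]_\C\to[\pi_0X,Q]_\C\to[\Sigma^{-1}\tau_{\leq -1}X,Q]_\C .
\]
Put \(Y=\Sigma\tau_{\leq -1}X\in\C_{\leq 0}\).
\begin{itemize}
\item The first term is \([\Sigma^{-1}Y,Q]_\C\cong[Y,\Sigma Q]_\C\), which vanishes by \(t\)-injectivity.
\item The last term is \([\tau_{\leq -1}X,\Sigma Q]_\C\). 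It vanishes because \(\tau_{\leq -1}X\in\C_{\leq -1}\subseteq\C_{\leq 0}\).
\end{itemize}
Hence restriction gives an isomorphism \([\tau_{\leq 0}X,Q]_\C\cong[\pi_0X,Q]_\C\). Finally, \(\pi_0X\in\C_{\geq 0}\), so \([\pi_0X,Q]_\C\cong[\pi_0X,\tau_{\geq 0}Q]_\C=[\pi_0X,\pi_0Q]_{\C^\heartsuit}\).

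It remains to check that the composite of these identifications is \(\rho_{X,Q}\). This is a naturality check: \(\pi_0(f)\) for \(f\colon X\to Q\) is obtained by applying \(\tau_{\leq 0}\) and then \(\tau_{\geq 0}\), and \(Q\simeq\tau_{\leq 0}Q\). This bookkeeping is the only mildly fiddly point.

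\emph{(2)\(\Rightarrow\)(1).} First I show \(Q\in\C_{\leq 0}\); I expect this to be the main obstacle. Take \(X=\tau_{\geq 1}Q\). Since \(\pi_0X=0\), hypothesis (2) forces \([\tau_{\geq 1}Q,Q]_\C=0\). In particular the canonical map \(\tau_{\geq 1}Q\to Q\) is zero. The cofiber sequence \(\tau_{\geq 1}Q\to Q\to\tau_{\leq 0}Q\) then splits as
\[
\tau_{\leq 0}Q\simeq Q\oplus\Sigma\tau_{\geq 1}Q .
\]
Thus \(\Sigma\tau_{\geq 1}Q\) is a retract of an object of \(\C_{\leq 0}\), so it lies in \(\C_{\leq 0}\). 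It also lies in \(\C_{\geq 2}\). Therefore its identity map is zero, so \(\tau_{\geq 1}Q\simeq 0\) and \(Q\simeq\tau_{\leq 0}Q\in\C_{\leq 0}\).

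For the vanishing condition, let \(X\in\C_{\leq 0}\). Applying (2) to \(\Sigma^{-1}X\) gives
\[
[X,\Sigma Q]_\C\cong[\Sigma^{-1}X,Q]_\C\cong[\pi_0\Sigma^{-1}X,\pi_0Q]_{\C^\heartsuit}=[\pi_1X,\pi_0Q]_{\C^\heartsuit}.
\]
The right-hand side is zero because \(\pi_1X=0\) for \(X\in\C_{\leq 0}\). Hence \(Q\) is \(t\)-injective.
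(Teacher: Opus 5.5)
Your proof is correct. It takes a different route from the paper, which gives no argument of its own and simply cites \cite[Proposition~C.5.7.3]{SAG}.

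You derive both implications directly from the axioms of a $t$-structure.
\begin{itemize}
\item For (1)$\Rightarrow$(2), you pass from $[X,Q]_{\mathcal{C}}$ to $[\tau_{\le 0}X,Q]_{\mathcal{C}}$, using $Q\in\mathcal{C}_{\le 0}$. You then pass to $[\pi_0X,Q]_{\mathcal{C}}$ via the cofiber sequence $\pi_0X\to\tau_{\le0}X\to\tau_{\le-1}X$. The $\mathrm{Ext}^1$-vanishing is used twice here: on $\Sigma\tau_{\le-1}X$ to get injectivity, and on $\tau_{\le-1}X$ to get surjectivity. Finally you reach $[\pi_0X,\pi_0Q]$ through the $\tau_{\ge0}$-adjunction.
\item For (2)$\Rightarrow$(1), you test against $\tau_{\ge1}Q$ and against $\Sigma^{-1}X$.
\end{itemize}

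Every step holds, including the identification of the composite with $\rho_{X,Q}$ and the retract argument showing $\tau_{\ge1}Q\simeq0$. That last step can be shortened. The adjunction gives $[\tau_{\ge1}Q,\tau_{\ge1}Q]_{\mathcal{C}}\cong[\tau_{\ge1}Q,Q]_{\mathcal{C}}=0$, and this sends the identity to the counit. So $\mathrm{id}_{\tau_{\ge1}Q}=0$ directly, with no splitting needed.

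What each approach offers:
\begin{itemize}
\item Your route makes the paper self-contained. It shows that the equivalence uses only the $t$-structure axioms; the Grothendieck prestable hypothesis enters only in the existence statement of Theorem~\ref{thm:injective-lifts}. It also shows that injectivity of $\rho_{X,Q}$ needs vanishing of $\mathrm{Ext}^1$ against all of $\mathcal{C}_{\le0}$, not just against the heart.
\item The citation keeps the paper short and aligns its conventions with Lurie's. The paper needs those conventions anyway for the existence and uniqueness of the lifts $I_J$.
\end{itemize}
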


\begin{proof}
This is
\cite[Proposition~C.5.7.3]{SAG}.
\end{proof}

Thus a \(t\)-injective object \(Q\) represents the functor obtained by
first taking the degree-zero homotopy object and then mapping into the
injective object \(\pi_0Q\) of the heart.

\medskip
We will occasionally omit the subscripts from \(\rho_{X,Q}\)
when the objects \(X\) and \(Q\) are clear from context.

\begin{theorem}[Lurie]
\label{thm:injective-lifts}
Let \(\mathcal P\) be a Grothendieck prestable
\(\infty\)-category, and let
\[
    \C := \operatorname{Sp}(\mathcal P)
\]
be equipped with its canonical \(t\)-structure.
Then the functor \(\pi_0\) induces an equivalence of ordinary categories
\[
    \pi_0\colon
    h\operatorname{Inj}_{t}(\C)
    \xrightarrow{\ \sim\ }
    \operatorname{Inj}\bigl(\C^{\heartsuit}\bigr).
\]

In particular, every injective object \(J\in\C^{\heartsuit}\)
admits a \(t\)-injective lift \(I_J\in\C\), together with
an isomorphism
\[
    \pi_0 I_J \cong J.
\]
Such a lift, equipped with this identification, is unique
up to unique isomorphism in \(h\C\) compatible with the
identification.

Moreover, for every \(X\in\C\), the canonical map
\(\rho_{X,I_J}\), followed by the chosen identification
\(\pi_0 I_J\cong J\), gives an isomorphism
\[
    [X,I_J]_\C
    \xrightarrow{\ \sim\ }
    \bigl[\pi_0X,J\bigr]_{\C^{\heartsuit}}.
\]
This universal property characterizes \(I_J\) together
with its identification \(\pi_0I_J\cong J\).
\end{theorem}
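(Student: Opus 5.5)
The proof has three parts: essential surjectivity via an injective-envelope construction in the prestable category, full faithfulness from Proposition~\ref{prop:characterization-t-injective}, and the universal property read off from the same proposition.

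\emph{Full faithfulness and uniqueness.} Let \(Q,Q'\in\operatorname{Inj}_t(\C)\). Proposition~\ref{prop:characterization-t-injective} with \(X=Q\) says that
\[
\rho_{Q,Q'}\colon [Q,Q']_\C\to[\pi_0Q,\pi_0Q']_{\C^\heartsuit}
\]
is bijective. This map is exactly the action of \(\pi_0\) on morphisms in the homotopy category, so \(\pi_0\colon h\operatorname{Inj}_t(\C)\to\C^\heartsuit\) is fully faithful. To see that it lands in \(\operatorname{Inj}(\C^\heartsuit)\), take a monomorphism \(A\hookrightarrow B\) in the heart. Any map \(A\to\pi_0Q\) lifts uniquely to a map \(A\to Q\) in \(\C\). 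Let \(C\) be the cokernel of \(A\hookrightarrow B\). The cofiber of \(A\to B\) in \(\C\) is then \(C\), which lies in \(\C_{\le0}\). Hence \([C,\Sigma Q]=0\), and the long exact sequence gives the extension \(B\to Q\). Applying \(\pi_0\) yields the required extension \(B\to\pi_0Q\).

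\emph{Essential surjectivity.} This is the main step, and it is where the Grothendieck prestable hypothesis enters. Fix an injective \(J\in\C^\heartsuit\). I would construct an object \(Q\in\C_{\le0}\) such that \([X,\Sigma Q]=0\) for all \(X\in\C_{\le 0}\).

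First, the Grothendieck hypothesis gives a small set of generators and good behaviour of filtered colimits. From these one gets, by a small-object or transfinite argument, that every object of \(\mathcal P\) embeds into a \(t\)-injective object. Concretely, one repeatedly kills classes in \([X,\Sigma Q]\) with \(X\) ranging over a set of generators of \(\C_{\le0}\), forming pushouts. This is Lurie's argument in SAG C.5.7.

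Taking \(Q\) to be such an injective hull of \(J\), we get \(\pi_0Q\supseteq J\). Since \(J\) is injective, it splits off as a direct summand of \(\pi_0 Q\). Full faithfulness then lets us lift the corresponding idempotent to \(Q\). Finally, \(\C\) is idempotent complete because it is presentable, so \(Q\) splits. The resulting summand \(I_J\) is again \(t\)-injective and satisfies \(\pi_0 I_J\cong J\).

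\emph{Uniqueness and the universal property.} Uniqueness of \(I_J\) up to unique isomorphism compatible with the identification follows directly from full faithfulness. The displayed isomorphism
\[
[X,I_J]_\C\xrightarrow{\ \sim\ }[\pi_0X,J]_{\C^{\heartsuit}}
\]
for every \(X\in\C\) is Proposition~\ref{prop:characterization-t-injective}(2) composed with \(\pi_0I_J\cong J\).

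For the characterization statement, suppose a pair \((Q,\phi\colon\pi_0Q\cong J)\) has the same universal property. Then it represents the same functor \([\pi_0(-),J]\), so Yoneda in \(h\C\) gives \(Q\cong I_J\) compatibly with the identifications.
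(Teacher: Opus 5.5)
Your arguments for full faithfulness, for the image of $\pi_0$ consisting of injectives, for uniqueness, for the displayed isomorphism and for the characterization are correct. The paper's own proof consists of citing \cite[Theorem~C.5.7.4]{SAG} for the equivalence and reading off the displayed isomorphism from Proposition~\ref{prop:characterization-t-injective}, as you do. The gap is in your sketch of essential surjectivity, which is the only substantive step. Killing classes in $[X,\Sigma Q]$ for $X$ in a set of generators of $\C_{\le0}$ does not force $t$-injectivity. The class of $X\in\C_{\le0}$ with $[X,\Sigma Q]=0$ is closed under extensions and coproducts, but not under cofibers. Indeed, for a cofiber sequence $X\to Y\to Z$ in $\C_{\le0}$ with $[X,\Sigma Q]=[Y,\Sigma Q]=0$, the group $[Z,\Sigma Q]$ is the cokernel of $[Y,Q]\to[X,Q]$, which need not vanish. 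Concretely, take $\mathcal P=D(\Ab)_{\ge0}$, so $\C=D(\Ab)$, and start with $Q=\mathbb Z$. The objects $\Sigma^{-k}\mathbb Z$ for $k\ge0$ generate $\C_{\le0}$ under extensions and colimits in $\C_{\le0}$. Yet $[\Sigma^{-k}\mathbb Z,\Sigma Q]=\operatorname{Ext}^{k+1}_{\Ab}(\mathbb Z,\mathbb Z)=0$ for all $k\ge0$, so your procedure stops at $\mathbb Z$. But $\mathbb Z$ is not $t$-injective: with $\mathbb Z/n=\operatorname{cofib}(n\colon\mathbb Z\to\mathbb Z)$, we have $[\mathbb Z/n,\Sigma Q]=\operatorname{Ext}^1_{\Ab}(\mathbb Z/n,\mathbb Z)\neq0$. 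What such test objects miss is the injectivity of $\pi_0Q$.

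The repair is a detection lemma for a specific set of test objects. Let $G$ be a generator of the Grothendieck abelian category $\C^\heartsuit$. Let $\mathcal T$ consist of the objects $\Sigma^{-k}G$ for $k\ge0$ and $G/K$ for subobjects $K\subseteq G$. The claim is that $Q\in\C_{\le0}$ is $t$-injective if and only if $[T,\Sigma Q]=0$ for all $T\in\mathcal T$. Here is a proof.
\begin{itemize}
\item Since $[K,Q]=[K,\pi_0Q]$ for $K$ in the heart, vanishing on the objects $G/K$ is Baer's criterion for $\pi_0Q$.
\item Dimension shifting along $0\to K\to\bigoplus G\to A\to0$ then gives $[A,\Sigma^iQ]=0$ for all $A\in\C^\heartsuit$ and $i\ge1$.
\item Postnikov induction extends this vanishing to bounded objects of $\C_{\le0}$.
\item For general $X\in\C_{\le0}$, write $X\simeq\operatorname*{colim}_n\tau_{\ge-n}X$, using right separatedness of $\Sp(\mathcal P)$ and closure of $\C_{\le-m}$ under filtered colimits. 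The Milnor $\varprojlim^1$ term vanishes because each map $[\tau_{\ge-n-1}X,Q]\to[\tau_{\ge-n}X,Q]$ is surjective; its obstruction lies in $[\pi_{-n-1}X,\Sigma^{n+2}Q]=0$.
\end{itemize}
With $\mathcal T$ in hand, your transfinite construction goes through. Choose a regular cardinal $\kappa$ such that every $T\in\mathcal T$ is $\kappa$-compact, and put $Q_0=J$. Let $Q_{\beta+1}$ be the cofiber of $\bigoplus_{(T,\alpha)}\Sigma^{-1}T\to Q_\beta$, the sum running over all $T\in\mathcal T$ and $\alpha\in[T,\Sigma Q_\beta]$. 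Take colimits at limit stages. Each $Q_\beta$ lies in $\C_{\le0}$. The map $J\to\pi_0Q_\beta$ stays monic, by $\pi_1T=0$ at successor stages and exactness of filtered colimits in the heart at limit stages. Every class in $[T,\Sigma Q_\kappa]$ comes from some stage and dies at the next, so $Q_\kappa$ satisfies the lemma. Your splitting argument then finishes the proof.
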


\begin{proof}
The equivalence is
\cite[Theorem~C.5.7.4]{SAG}. The final isomorphism follows from
\Cref{prop:characterization-t-injective}.
\end{proof}

\subsection{The classical Brown-Comenetz spectrum}

The stable \(\infty\)-category \(\Sp\) carries its standard
\(t\)-structure
\[
    \Sp_{\geq 0}
    =
    \left\{
        X\in\Sp
        \,\middle|\,
        \pi_nX=0\text{ for every }n<0
    \right\}
\]
and
\[
    \Sp_{\leq 0}
    =
    \left\{
        X\in\Sp
        \,\middle|\,
        \pi_nX=0\text{ for every }n>0
    \right\}.
\]
Its heart is canonically equivalent to the category of abelian groups:
\[
    \Sp^{\heartsuit}\simeq\Ab.
\]

The abelian group \(\mathbb{Q}/\mathbb{Z} \in \Ab\) is injective. By
\Cref{thm:injective-lifts}, there exists an essentially unique
\(t\)-injective spectrum.

\begin{definition}[Brown--Comenetz spectrum]
The \emph{Brown--Comenetz spectrum} is the essentially unique
\(t\)-injective spectrum
\[
    I_{\mathbb{Q}/\mathbb{Z}}\in\Sp
\]
such that
\[
    \pi_0I_{\mathbb{Q}/\mathbb{Z}}
    \cong \mathbb{Q}/\mathbb{Z}.
\]
\end{definition}

By \Cref{thm:injective-lifts}, it is characterized by the natural
isomorphisms
\[\rho: 
    [X,I_{\mathbb{Q}/\mathbb{Z}}]_{\Sp}
    \xrightarrow{\ \sim\ }
    [\pi_0X,\mathbb{Q}/\mathbb{Z}]_{\Ab}
\]
for every \(X\in\Sp\). Applying this to \(\Sigma^nX\), for any
\(n\in\mathbb{Z}\), gives
\[
    [\Sigma^nX,I_{\mathbb{Q}/\mathbb{Z}}]_{\Sp}
    \xrightarrow{\ \sim\ }
    [\pi_{-n}X,\mathbb{Q}/\mathbb{Z}]_{\Ab}.
\]
See \cite[Example~C.5.7.10]{SAG}.

\begin{remark}
The spectrum \(I_{\mathbb{Q}/\mathbb{Z}}\) is not the Eilenberg--Mac
Lane spectrum \(H(\mathbb{Q}/\mathbb{Z})\). The former represents the
functor
\[
    X\longmapsto
    \bigl[\pi_0X,\mathbb{Q}/\mathbb{Z}\bigr]_{\Ab}
\]
on all spectra, whereas the latter represents ordinary 
cohomology with coefficients in \(\mathbb{Q}/\mathbb{Z}\).
\end{remark}

\section{Condensation}

\subsection{Condensed spectra}

Condensed mathematics provides a framework in which topological objects can be studied using algebraic and homological methods. Rather than thinking of a topological space only in terms of its underlying set and open subsets, one records its continuous families of points parametrized by profinite sets, and hence its notion of convergence. One can apply this construction to obtain the category of condensed spectra. Ordinary spectra embed into this category as discrete condensed spectra, while a general condensed spectrum has homotopy groups valued in condensed abelian groups. In this sense, condensed spectra generalize ordinary spectra and allow their homotopy groups to carry genuinely topological information, extending the role played by topological abelian groups in classical homotopy theory.
We leave the choices of cardinals and universes implicit, as our constructions apply in both the condensed and pyknotic settings.

We briefly recall the definitions needed in what follows.

\begin{definition}[The pro-\'etale site of a point]
Let
\[
    \mathrm{ProFin}
\]
denote the category of profinite sets and continuous maps. The
\emph{pro-\'etale site of a point}, denoted
\[
    \ast_{\mathrm{pro\acute{e}t}},
\]
is the category \(\mathrm{ProFin}\), equipped with the Grothendieck
topology in which a finite family
\[
    \{S_i\longrightarrow S\}_{i=1}^{n}
\]
is a covering family if the induced map
\[
    \coprod_{i=1}^{n}S_i\longrightarrow S
\]
is surjective; see \cite[Definition~1.2]{ScholzeCondensed2026}.
\end{definition}

\begin{definition}[Condensed objects]
For an ordinary category \(\mathcal{E}\) with the required limits, a
condensed \(\mathcal{E}\)-valued object is an \(\mathcal{E}\)-valued sheaf
on the pro-\'etale site of a point. We write
\[
  \Cond(\mathcal{E})
  :=\operatorname{Shv}(\operatorname{\ast_{\mathrm{pro\acute{e}t}}},\mathcal{E}).
\]
In particular, \(\Cond(\Ab)\) is the category of condensed abelian groups;
see \cite[Definition~1.2]{ScholzeCondensed2026}. 

\end{definition}

The category \(\Cond(\Ab)\) is Grothendieck abelian. In particular, it admits all small limits and colimits, has exact filtered colimits, and has enough injectives; see \cite[Theorem~1.10]{ScholzeCondensed2026}.

\begin{definition}[Condensed spectra]

For anima and spectra, we use the hypercomplete version of this
construction:
\[
  \Cond(\mathcal{S})
  :=\operatorname{Shv}^{\mathrm{hyp}}
    (\operatorname{\ast_{\mathrm{pro\acute{e}t}}},\mathcal{S}).
\]
This convention agrees with the pyknotic presentation by hypersheaves on
compact Hausdorff spaces; see
\cite[Proposition~2.3]{ScholzeCondensed2026} and
\cite[Corollary~2.4.4]{BarwickHaine2019}.
The hypercompleteness convention is essential below, as it ensures that
homotopy objects detect equivalences.

Equivalently, since stabilization of hypercomplete sheaves is computed pointwise, we can define the \(\infty\)-category of condensed spectra as the stabilization
\[
  \Cond(\Sp)
  :=\Sp\bigl(\Cond(\mathcal{S})\bigr).
\]
\end{definition}

\begin{definition}[Homotopy groups of a condensed spectrum]
Let $X\in 
    \Cond(\Sp)$.
    
For every integer \(n\), the \(n\)-th homotopy group of \(X\) is the
condensed abelian group
\[
    \pi_nX\in\Cond(\Ab)
\]
obtained by taking the sheaf associated with the presheaf
\[
    S\longmapsto\pi_n\bigl(X(S)\bigr).
\]
\end{definition}

The homotopy groups determine the standard \(t\)-structure on
\(\Cond(\Sp)\):
\[
    \Cond(\Sp)_{\geq 0}
    :=
    \left\{
        X\in\Cond(\Sp)
        \,\middle|\,
        \pi_nX=0
        \text{ for every }n<0
    \right\}
\]
and
\[
    \Cond(\Sp)_{\leq 0}
    :=
    \left\{
        X\in\Cond(\Sp)
        \,\middle|\,
        \pi_nX=0
        \text{ for every }n>0
    \right\}.
\]

Its heart is canonically equivalent to the category of condensed
abelian groups:
\[
    \Cond(\Sp)^{\heartsuit}
    \simeq
    \Cond(\Ab).
\]

\begin{definition}[The condensed object associated with a topological space]
Let \(T\) be a topological space. Its associated condensed set,
denoted
\[
    \underline{T},
\]
is defined on a profinite set \(S\) by
\[
    \underline{T}(S)
    :=
    \operatorname{Cont}(S,T),
\]
the set of continuous maps from \(S\) to \(T\).

If \(T\) is a topological group, ring, or abelian group, then
\(\underline{T}\) is naturally a condensed group, ring, or abelian
group, respectively.
\end{definition}

For compactly generated weak Hausdorff spaces, the functor
\[
    T\longmapsto\underline{T}
\]
is fully faithful; see
\cite[Theorem~2.16(ii)]{ScholzeCondensed2026}. In particular, locally
compact Hausdorff abelian groups embed fully faithfully into
\(\Cond(\Ab)\).

We will generally suppress the underline when viewing a locally compact abelian group as a condensed abelian group. Thus
\[
    \mathbb{T}:=\mathbb{R}/\mathbb{Z}
\]
denotes both the topological circle group and the corresponding condensed abelian group, with the intended meaning determined by context.

\begin{definition}[Discrete condensed objects]
Let \(M\) be a set, group, ring, or abelian group, equipped with the
discrete topology. Its associated condensed object is denoted
\[
    M^{\mathrm{disc}}.
\]

For every profinite set \(S\), one has
\[
    M^{\mathrm{disc}}(S)
    =
    \operatorname{Cont}(S,M),
\]
where the right-hand side consists of the locally constant maps
\(S\to M\).
\end{definition}

In particular,
\[
    \left(\mathbb{Q}/\mathbb{Z}\right)^{\mathrm{disc}}
    \in\Cond(\Ab)
\]
denotes the discrete condensed abelian group associated with the
abstract group \(\mathbb{Q}/\mathbb{Z}\). It is important to
distinguish this object from
\[
    \mathbb{T}=\mathbb{R}/\mathbb{Z}\in\Cond(\Ab),
\]
which carries its usual compact topology.

The inclusion of the discrete rational numbers into the real numbers induces a
monomorphism in condensed abelian groups
\[
    \iota:
    \left(\mathbb{Q}/\mathbb{Z}\right)^{\mathrm{disc}}
    \hookrightarrow
    \mathbb{T}.
\]

\begin{definition}[Discrete condensed spectra]
    There is a fully faithful functor
\[
    (-)^{\mathrm{disc}}:
    \Sp
    \longrightarrow
    \Cond(\Sp),
\]
left adjoint to the global sections functor. Explicitly, for a spectrum
\(E\) and a profinite set \(S\),
\[
    E^{\mathrm{disc}}(S) = \operatorname*{colim}_{S\twoheadrightarrow S_0}
    E^{S_0},
\]
where \(S_0\) ranges over the finite continuous quotients of \(S\). 

we call $E^{\mathrm{disc}}$ the \emph{discrete condensed spectrum} associated to $E$.
\end{definition}

The homotopy groups of a discrete condensed spectrum are the discrete
condensations of its ordinary homotopy groups:
\[
    \pi_n\left(E^{\mathrm{disc}}\right)
    \cong
    \left(\pi_nE\right)^{\mathrm{disc}}
    \qquad
    (n\in\mathbb{Z}).
\]
In particular,
\[
    \pi_n
    \left(
        I_{\mathbb{Q}/\mathbb{Z}}^{\mathrm{disc}}
    \right)
    \cong
    \left(
        \pi_nI_{\mathbb{Q}/\mathbb{Z}}
    \right)^{\mathrm{disc}}.
\]
See \cite[second remark following Definition~3.3]{DerivedStoneEmbedding}.

\begin{definition}[Eilenberg--Mac Lane condensed spectra]
For every condensed abelian group \(A\in\Cond(\Ab)\), we denote by
\[
    HA\in\Cond(\Sp)
\]
the corresponding Eilenberg--Mac Lane condensed spectrum. It is
characterized by
\[
    \pi_0(HA)\cong A
\]
and
\[
    \pi_n(HA)=0
    \qquad
    \text{for every }n\neq 0.
\]
\end{definition}

For an ordinary abelian group \(M\), there is a canonical equivalence
\[
    H\left(M^{\mathrm{disc}}\right)
    \simeq
    (HM)^{\mathrm{disc}}.
\]
In particular,
\[
    H\left(
        \left(\mathbb{Q}/\mathbb{Z}\right)^{\mathrm{disc}}
    \right)
    \simeq
    \left(
        H(\mathbb{Q}/\mathbb{Z})
    \right)^{\mathrm{disc}}.
\]

\begin{definition}[The symmetric monoidal structure on condensed spectra]

The category \(\Cond(\Sp)\) is a presentable stable closed symmetric
monoidal \(\infty\)-category, with unit given by
\(\mathbb{S}^{\mathrm{disc}}\); see
\cite[Proposition~1.3.4.7]{SAG}.

The closed symmetric monoidal structure on
\(\Cond(\Sp)\) provides an internal Hom condensed spectrum
\[
    \uHom_{\Cond(\Sp)}(X,Y)
    \in\Cond(\Sp),
\]
characterized by the natural equivalence
\[
    \Map_{\Cond(\Sp)}
    \left(
        Z,
        \uHom_{\Cond(\Sp)}(X,Y)
    \right)
    \simeq
    \Map_{\Cond(\Sp)}
    \left(
        Z\otimes X,
        Y
    \right)
\]
for all \(Z\in\Cond(\Sp)\).

\end{definition}

The \((-)^{\mathrm{disc}}:\) functor is strong symmetric monoidal with respect to this described structure.

\begin{notation}

Similarly to the notation used in the abelian and stable settings, we will use
\[
    [X,Y]_{\Cond(\Ab)}:=\uHom_{\mathcal \Cond(\Ab)}(X,Y).
\]
and
\[
    [X,Y]_{\Cond(\Sp)}
    :=
    \pi_0 \uHom_{\Cond(\Sp)}(X,Y)
\]
to denote the condensed abelian group of morphisms from \(X\) to \(Y\).

\end{notation}

\subsection{Locally Compact Condensed Spectra}

We write \(\LCA\) for the category of locally compact abelian groups and
continuous homomorphisms.

\begin{definition}
Define
\[
\Cond(\Sp)_{\mathrm{lc}} \subseteq \Cond(\Sp)
\]
as the full subcategory of \emph{locally compact condensed spectra} spanned by those objects \(X\)
for which \(\pi_nX\) is represented by a locally compact abelian group for
every \(n\in\mathbb{Z}\).
\end{definition}
\begin{corollary}
\label{cor:condab-ext-circle}
Let \(A\) be a locally compact abelian group then
\[
\RHom_{\Cond(\Ab)}(A,\mathbb{T})=\uHom_{\Cond(\Ab)}(A,\mathbb{T})[0].
\]
\end{corollary}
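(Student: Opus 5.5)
The corollary combines the two results quoted in the introduction: the Hoffmann--Spitzweck vanishing $\Ext^{q}_{\LCA}(A,\mathbb{T})=0$ for $q\geq 1$ \cite[Proposition~4.14(vii)]{HoffmannSpitzweck2007}, and the full faithfulness of $D^{b}(\LCA)\to D(\Cond(\Ab))$ \cite[Corollary~4.9]{ScholzeCondensed2026}. The plan is first to prove the statement on global sections, i.e.\ for $\RHom$ as a complex of abelian groups. Then we upgrade it to the internal (condensed) derived Hom by testing against all profinite sets.

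\textbf{Step 1 (global sections).} Regard $A$ and $\mathbb{T}$ as objects of $D^b(\LCA)$ concentrated in degree $0$. By full faithfulness,
\[
H^{q}\RHom_{D(\Cond(\Ab))}(A,\mathbb{T})\cong \Hom_{D^b(\LCA)}(A,\mathbb{T}[q])=\Ext^{q}_{\LCA}(A,\mathbb{T}),
\]
for every $q\in\mathbb{Z}$. This vanishes for $q<0$ trivially and for $q\geq 1$ by Hoffmann--Spitzweck. For $q=0$ it is $\Hom_{\mathrm{cont}}(A,\mathbb{T})$, which is also $\Hom_{\Cond(\Ab)}(A,\mathbb{T})$ by the fully faithful embedding of locally compact groups. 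Hence the underlying abelian-group version holds.

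\textbf{Step 2 (internal version).} Write $\underline{R\Hom}:=\underline{R\Hom}_{\Cond(\Ab)}(A,\mathbb{T})$ for the internal derived Hom. For a profinite set $S$ we have
\[
R\Gamma(S,\underline{R\Hom})\simeq \RHom(\mathbb{Z}[S]\otimes^{L} A,\mathbb{T})\simeq \RHom(A,\underline{R\Hom}(\mathbb{Z}[S],\mathbb{T})).
\]
The cleanest route is the one the introduction points to. The proof of \cite[Corollary~4.9]{ScholzeCondensed2026} identifies the derived internal Hom $\underline{R\Hom}_{\Cond(\Ab)}(A,B)$, for $A,B\in\LCA$, with the condensed object attached to the $\LCA$-derived internal Hom. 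Its cohomology sheaves are $\uHom(A,B)$ in degree $0$ and sheaves built from $\Ext^{q}_{\LCA}$ in degree $q$. So I would cite that identification directly: $\underline{\Ext}^q(A,\mathbb{T})$ is a sheaf whose sections on extremally disconnected $S$ are the $\Ext$-groups $\Ext^q(\mathbb{Z}[S]\otimes A,\mathbb{T})$. Alternatively, one can use $\Ext^q(\mathbb{Z}[S]\otimes A,\mathbb{T})\cong\Ext^q(A,\underline{R\Hom}(\mathbb{Z}[S],\mathbb{T}))$. For $S$ extremally disconnected, $\mathbb{Z}[S]$ is projective, so $\underline{R\Hom}(\mathbb{Z}[S],\mathbb{T})=C(S,\mathbb{T})$. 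This is again a (non locally compact, but) Hausdorff topological group to which Scholze's computation applies. Extremally disconnected sets suffice because they form a basis on which sheaf cohomology is acyclic. Then $\underline{\Ext}^q$ vanishes for $q\geq1$, and degree zero recovers $\uHom_{\Cond(\Ab)}(A,\mathbb{T})$, yielding $\underline{R\Hom}=\uHom[0]$.

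\textbf{Main obstacle.} The delicate point is Step 2: the vanishing for $q\geq 1$ must hold for the internal object, not just on global sections. That means we need $\Ext^q_{\Cond(\Ab)}(A,C(S,\mathbb{T}))=0$, or equivalently $\Ext^q_{\Cond(\Ab)}(A\otimes\mathbb{Z}[S],\mathbb{T})=0$, for extremally disconnected $S$, and the term $A\otimes\mathbb{Z}[S]$ is not locally compact. I would handle this by invoking the explicit statement in the proof of \cite[Corollary~4.9]{ScholzeCondensed2026} that it identifies derived internal Homs. If needed, I would verify it via the structure theorem $A\cong\mathbb{R}^n\oplus A'$, where $A'$ has a compact open subgroup. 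One then reduces to the cases $\mathbb{R}$, compact, and discrete, for each of which $\underline{R\Hom}(-,\mathbb{T})$ is computed in Clausen--Scholze.
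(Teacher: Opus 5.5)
Your argument is correct, but it reconstructs the result rather than following the paper. The paper's proof is a single citation of \cite[Corollary~2.9]{weilmoore_clausen}, which already is the internal statement.

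In your version, Step~1 (full faithfulness plus Hoffmann--Spitzweck) only gives the global-sections identity. It does no work toward the internal one, as you correctly observe: the sections of $\underline{\Ext}^q(A,\mathbb{T})$ over an extremally disconnected $S$ are $\Ext^q(\mathbb{Z}[S]\otimes^{\mathbf{L}}A,\mathbb{T})$, and that source is not locally compact.

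What actually proves the corollary is your fallback d\'evissage:
\begin{enumerate}
\item write $A\cong\mathbb{R}^n\oplus A'$;
\item use the sequence $0\to K\to A'\to A'/K\to 0$, which is exact in $\Cond(\Ab)$ since $A'/K$ is discrete;
\item use the internal computations for the building blocks: $\mathbb{R}$ via $\mathbf{R}\uHom(\mathbb{R},\mathbb{Z})=0$ and $\mathbf{R}\uHom(\mathbb{R},\mathbb{R})\simeq\mathbb{R}[0]$; compact $K$ via a presentation $0\to K\to\mathbb{T}^{I_0}\to\mathbb{T}^{I_1}\to 0$; discrete groups via a free resolution, which dualizes to a surjection of compact groups and hence an epimorphism in $\Cond(\Ab)$.
\end{enumerate}
This route buys transparency: it shows the vanishing comes from the internal $\mathbb{R}$/$\mathbb{Z}$ computations of Clausen--Scholze, not from $\Ext_{\LCA}$-vanishing. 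The citation is shorter and rests on the same input.

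Two side claims in Step~2 should be dropped or repaired.
\begin{itemize}
\item Projectivity of $\mathbb{Z}[S]$ does not by itself give $\mathbf{R}\uHom(\mathbb{Z}[S],\mathbb{T})\simeq C(S,\mathbb{T})[0]$. Evaluating at an extremally disconnected $T$ yields $H^q(S\times T,\mathbb{T})$, and $S\times T$ is generally not extremally disconnected. The conclusion still holds, but the reason is that profinite sets have no higher $\mathbb{T}$-cohomology.
\item $C(S,\mathbb{T})$ is not locally compact for infinite $S$, so \cite[Corollary~4.9]{ScholzeCondensed2026} does not apply to it.
\end{itemize}
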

\begin{proof}
\cite[Corollary~2.9]{weilmoore_clausen}
\end{proof}
\subsection{The condensed Pontryagin dualizing spectrum}
\label{sec:condensed-dualizing}

Let
\[
  I_{\mathbb{Q}/\mathbb{Z}}\in\Sp
\]
be the classical Brown--Comenetz spectrum and let
\[
  I_{\mathbb{Q}/\mathbb{Z}}^{\mathrm{disc}}
  \in\Cond(\Sp)
\]
be its discrete condensation.

We first record some useful properties of this discrete condensation.

\begin{lemma}
\label{lem:disc-QZ-injective}
For every profinite set \(S\), let 
\[
    B_S :=(\mathbb{Q}/\mathbb{Z})^{\mathrm{disc}}(S).
\]
Then \(B_S\) is an injective abelian group.
\end{lemma}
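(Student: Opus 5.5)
We identify \(B_S\) concretely and reduce to divisibility. By the definition of discrete condensed objects, \(B_S\) is the group of locally constant maps \(S\to\mathbb{Q}/\mathbb{Z}\). Since \(S\) is compact, every such map takes finitely many values and factors through a finite continuous quotient \(S\twoheadrightarrow S_0\). This gives an isomorphism
\[
    B_S\cong\operatorname*{colim}_{S\twoheadrightarrow S_0}\left(\mathbb{Q}/\mathbb{Z}\right)^{S_0},
\]
where the colimit is filtered over the finite quotients of \(S\). Each \(\left(\mathbb{Q}/\mathbb{Z}\right)^{S_0}\) is a finite product of copies of \(\mathbb{Q}/\mathbb{Z}\), hence injective.

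For an abelian group, injectivity is equivalent to divisibility (Baer's criterion over the principal ideal domain \(\mathbb{Z}\)). Divisibility is not preserved by arbitrary limits, but it is preserved by arbitrary colimits, since quotients and direct sums of divisible groups are divisible. Alternatively, one can check divisibility directly. Given \(f\in B_S\) and \(n\geq 1\), choose a finite partition of \(S\) into clopen sets on which \(f\) is constant, say with value \(a_i\). On each piece, choose \(b_i\in\mathbb{Q}/\mathbb{Z}\) with \(nb_i=a_i\). The resulting function \(g\) is locally constant and satisfies \(ng=f\).

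Hence \(B_S\) is divisible, and therefore injective. There is no serious obstacle. The only point requiring care is the identification of locally constant functions on a profinite set as finite-valued functions constant on a clopen partition, and this follows from compactness of \(S\).
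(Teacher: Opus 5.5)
Your proof is correct and follows essentially the same route as the paper: injectivity reduces to divisibility, and the group of locally constant maps \(S\to\mathbb{Q}/\mathbb{Z}\) is divisible because \(\mathbb{Q}/\mathbb{Z}\) is. Your clopen-partition argument, which chooses an \(n\)-th root \(b_i\) on each piece, makes explicit the step the paper states without detail, and your colimit remark is a valid alternative.
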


\begin{proof}
Recall that an abelian group is injective if and only if it is divisible.
By definition,
\[
    B_S
    =
    \operatorname{Cont}
    \left(S,\mathbb{Q}/\mathbb{Z}\right),
\]
where \(\mathbb{Q}/\mathbb{Z}\) is given the discrete topology. Since
\(\mathbb{Q}/\mathbb{Z}\) is divisible, the group of continuous maps
\(\operatorname{Cont}(S,\mathbb{Q}/\mathbb{Z})\) is divisible. Hence
\(B_S\) is injective.
\end{proof}

\begin{lemma}
\label{lem:discrete-BC-sections}
For every profinite set \(S\), there is a canonical equivalence
\[
    I_{\mathbb{Q}/\mathbb{Z}}^{\mathrm{disc}}(S)
    \simeq
    I_{B_S},
\]
    where \(I_{B_S}\) denotes the Brown--Comenetz spectrum associated to the injective \(B_S\).
\end{lemma}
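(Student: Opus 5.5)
Using the explicit formula for discrete condensation,
\[
    I_{\mathbb{Q}/\mathbb{Z}}^{\mathrm{disc}}(S)
    \simeq
    \operatorname*{colim}_{S\twoheadrightarrow S_0}
    I_{\mathbb{Q}/\mathbb{Z}}^{S_0},
\]
I will show that this spectrum is \(t\)-injective in \(\Sp\) with \(\pi_0\) identified with \(B_S\). \Cref{thm:injective-lifts} then gives the equivalence with \(I_{B_S}\), unique compatibly with the identification.

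First, the homotopy groups. Each \(I_{\mathbb{Q}/\mathbb{Z}}^{S_0}\) is a finite product of copies of \(I_{\mathbb{Q}/\mathbb{Z}}\), so its homotopy is \(\prod_{S_0}\pi_n I_{\mathbb{Q}/\mathbb{Z}}\). Homotopy groups commute with filtered colimits, hence
\[
    \pi_n\bigl(I_{\mathbb{Q}/\mathbb{Z}}^{\mathrm{disc}}(S)\bigr)
    \cong
    \operatorname*{colim}_{S_0}\bigl(\pi_nI_{\mathbb{Q}/\mathbb{Z}}\bigr)^{S_0}
    =
    \operatorname{Cont}\bigl(S,\pi_nI_{\mathbb{Q}/\mathbb{Z}}\bigr).
\]
This vanishes for \(n>0\) because \(I_{\mathbb{Q}/\mathbb{Z}}\) is coconnective. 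For \(n=0\) it is \(\operatorname{Cont}(S,\mathbb{Q}/\mathbb{Z})=B_S\), which is injective by \Cref{lem:disc-QZ-injective}. So the spectrum lies in \(\Sp_{\leq 0}\) and has the correct \(\pi_0\), canonically, since the identification is induced from \(\pi_0I_{\mathbb{Q}/\mathbb{Z}}\cong\mathbb{Q}/\mathbb{Z}\).

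Second, \(t\)-injectivity, which is the main step. I will show \([X,\Sigma E]=0\) for \(X\in\Sp_{\leq 0}\), where \(E=I_{\mathbb{Q}/\mathbb{Z}}^{\mathrm{disc}}(S)\). A filtered colimit of \(t\)-injectives need not be \(t\)-injective for arbitrary \(X\), so I will avoid a direct colimit argument.

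Instead I will use the universal property. Since \(B_S\) is injective in \(\Ab\), \Cref{thm:injective-lifts} provides \(I_{B_S}\). The identification \(\pi_0E\cong B_S\) corresponds to a map \(E\to I_{B_S}\) under \(\rho\). I will check that this map is an isomorphism on all \(\pi_n\) by comparing with the known homotopy of \(I_{B_S}\):
\[
    \pi_{-n}I_{B_S}=[\pi_n\mathbb{S},B_S]_{\Ab}.
\]
Since \(\pi_n\mathbb{S}\) is finitely generated, Hom out of it commutes with the filtered colimit and with the finite products, giving
\[
    [\pi_n\mathbb{S},B_S]_{\Ab}
    \cong
    \operatorname*{colim}_{S_0}\bigl[\pi_n\mathbb{S},\mathbb{Q}/\mathbb{Z}\bigr]_{\Ab}^{S_0},
\]
which matches the computation of \(\pi_{-n}E\) above.

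The delicate point is that the comparison map actually induces these identifications, not merely that abstract isomorphisms exist. I will handle this by naturality. At each finite stage, the map \(I_{\mathbb{Q}/\mathbb{Z}}^{S_0}\to I_{(\mathbb{Q}/\mathbb{Z})^{S_0}}\) is an equivalence, because a finite product of \(t\)-injectives is \(t\)-injective. These stagewise maps are compatible with the colimit via the universal property \(\rho\). Composing gives a natural transformation from the colimit, and it is an isomorphism on homotopy because finitely generated Hom commutes with filtered colimits. Whitehead's theorem in \(\Sp\) then concludes the proof.
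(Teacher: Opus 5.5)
Your proposal is correct and follows essentially the same route as the paper. Both arguments use the universal property of \(I_{B_S}\) to turn the identification \(\pi_0E\cong B_S\) into a comparison map \(E\to I_{B_S}\). Both then show this map is an isomorphism on every homotopy group, using naturality of the identification \(\pi_kI_J\cong\Hom(\pi_{-k}\mathbb S,J)\) and the fact that \(\Hom(\pi_{-k}\mathbb S,-)\) commutes with the filtered colimit over finite quotients, since \(\pi_{-k}\mathbb S\) is finitely presented. (Your opening plan to check \(t\)-injectivity of \(E\) directly turns out to be unnecessary: it follows once the comparison map is shown to be an equivalence.)
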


\begin{proof}
Set
\[
    K_S:=I_{\mathbb{Q}/\mathbb{Z}}^{\mathrm{disc}}(S).
\]
By the formula for discrete condensation, there is a natural
equivalence
\[
    K_S
    \simeq
    \operatorname*{colim}_{S\twoheadrightarrow S_0}
    I_{\mathbb{Q}/\mathbb{Z}}^{S_0},
\]
where \(S_0\) ranges over the finite continuous quotients of \(S\).
Since homotopy groups commute with filtered colimits, this gives
a canonical identification
\[
    \pi_0K_S
    \cong
    \operatorname*{colim}_{S\twoheadrightarrow S_0}
    (\mathbb{Q}/\mathbb{Z})^{S_0}
    \cong B_S.
\]

By \Cref{lem:disc-QZ-injective}, the group \(B_S\) is injective.
Choose a Brown--Comenetz lift \(I_{B_S}\), equipped with an
identification \(\pi_0I_{B_S}\cong B_S\).
Its universal property gives an isomorphism
\[
    [K_S,I_{B_S}]_{\Sp}
    \xrightarrow{\ \sim\ }
    \Hom_{\Ab}(\pi_0K_S,B_S).
\]
Let
\[
    \phi_S:K_S\longrightarrow I_{B_S}
\]
represent the homotopy class corresponding to the canonical
identification \(\pi_0K_S\cong B_S\).

We claim that \(\phi_S\) is an equivalence.
For every finite quotient \(S_0\) and every \(k\in\mathbb Z\),
the Brown--Comenetz universal property gives natural
isomorphisms
\[
    \pi_k\bigl(I_{\mathbb{Q}/\mathbb{Z}}^{S_0}\bigr)
    \cong
    \Hom_{\Ab}
    \bigl(\pi_{-k}\mathbb S,(\mathbb{Q}/\mathbb{Z})^{S_0}\bigr),
\]
and
\[
    \pi_k I_{B_S}
    \cong
    \Hom_{\Ab}(\pi_{-k}\mathbb S,B_S).
\]
The composite
\[
    I_{\mathbb{Q}/\mathbb{Z}}^{S_0}
    \longrightarrow K_S
    \xrightarrow{\phi_S} I_{B_S}
\]
induces on \(\pi_0\) the canonical coefficient map
\[
    (\mathbb{Q}/\mathbb{Z})^{S_0}\longrightarrow B_S.
\]
Naturality of the Brown--Comenetz comparison therefore identifies
\(\pi_k(\phi_S)\) with the canonical map
\[
    \operatorname*{colim}_{S\twoheadrightarrow S_0}
    \Hom_{\Ab}
    \bigl(\pi_{-k}\mathbb S,(\mathbb{Q}/\mathbb{Z})^{S_0}\bigr)
    \longrightarrow
    \Hom_{\Ab}(\pi_{-k}\mathbb S,B_S).
\]
The abelian group \(\pi_{-k}\mathbb S\) is finitely presented,
so this map is an isomorphism for every \(k\in\mathbb Z\).
Thus \(\phi_S\) induces an isomorphism on every homotopy group
and is consequently an equivalence.
\end{proof}

We now turn to the central construction of this paper: the condensed
Pontryagin dualizing spectrum.

\medskip

By \Cref{thm:injective-lifts}, there is a natural isomorphism
\[
  [H(\mathbb{Q}/\mathbb{Z}),I_{\mathbb{Q}/\mathbb{Z}}]_{\Sp}
  \cong
  \End_{\Ab}(\mathbb{Q}/\mathbb{Z}).
\]
Let
\[
  h:H(\mathbb{Q}/\mathbb{Z})
  \longrightarrow I_{\mathbb{Q}/\mathbb{Z}}
\]
denote the morphism corresponding to
\(\id_{\mathbb{Q}/\mathbb{Z}}\), and write \(h^{\mathrm{disc}}\) for its
discrete condensation.

We can now define the desired spectrum.

\begin{definition}[The condensed Pontryagin dualizing spectrum]
\label{def:condensed-pontryagin-dualizing-spectrum}
Define
\[
    I_{\mathbb{T}}\in\Cond(\Sp)
\]
by the following pushout square:
\[\begin{tikzcd}
	{H\left(
	    \left(\mathbb{Q}/\mathbb{Z}\right)^{\mathrm{disc}}
	\right)} & {H\mathbb{T}} \\
	{I_{\mathbb{Q}/\mathbb{Z}}^{\mathrm{disc}}} & {I_{\mathbb{T}}.}
	\arrow["{H\iota}", from=1-1, to=1-2]
	\arrow["{h^{\mathrm{disc}}}"', from=1-1, to=2-1]
	\arrow[from=1-2, to=2-2]
	\arrow[from=2-1, to=2-2]
	\arrow["\lrcorner"{anchor=center, pos=0.125, rotate=180}, draw=none, from=2-2, to=1-1]
\end{tikzcd}\]
\end{definition}

The construction glues the classical Brown--Comenetz spectrum, which
has the desired stable mapping property, to the Eilenberg--Mac Lane
spectrum \(H\mathbb{T}\), which has the desired degree-zero homotopy group.

\begin{proposition}
\label{prop:pi-zero-condensed-pontryagin-dualizing-spectrum}
The following hold for the condensed spectrum \( I_{\mathbb{T}} \):
\begin{enumerate}
    \item \( I_{\mathbb{T}} \in \Cond(\Sp)_{\leq 0} \).
    \item There is a canonical isomorphism \( \pi_0 I_{\mathbb{T}} \cong \mathbb{T} \) in \( \Cond(\Ab) \).
\end{enumerate}
\end{proposition}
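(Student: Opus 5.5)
The pushout square is also a pullback, because \(\Cond(\Sp)\) is stable. Hence it yields a cofiber sequence
\[
    H\bigl((\mathbb{Q}/\mathbb{Z})^{\mathrm{disc}}\bigr)
    \xrightarrow{\ (h^{\mathrm{disc}},\,H\iota)\ }
    I_{\mathbb{Q}/\mathbb{Z}}^{\mathrm{disc}}\oplus H\mathbb{T}
    \longrightarrow
    I_{\mathbb{T}},
\]
whose maps carry the appropriate signs. The plan is to read off the homotopy of \(I_{\mathbb{T}}\) from the associated long exact sequence of condensed homotopy groups.

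\textbf{Homotopy of the three terms.} The left term has homotopy concentrated in degree \(0\), where it equals \((\mathbb{Q}/\mathbb{Z})^{\mathrm{disc}}\). The term \(H\mathbb{T}\) has homotopy \(\mathbb{T}\) in degree \(0\) and nothing else. For the Brown--Comenetz term we use \(\pi_n(I^{\mathrm{disc}}_{\mathbb{Q}/\mathbb{Z}})\cong(\pi_nI_{\mathbb{Q}/\mathbb{Z}})^{\mathrm{disc}}\) and \(\pi_nI_{\mathbb{Q}/\mathbb{Z}}\cong[\pi_{-n}\mathbb S,\mathbb{Q}/\mathbb{Z}]_{\Ab}\). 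Since \(\pi_{-n}\mathbb S=0\) for \(n>0\) and \(\pi_0\mathbb S=\mathbb Z\), this gives \(I^{\mathrm{disc}}_{\mathbb{Q}/\mathbb{Z}}\in\Cond(\Sp)_{\le 0}\) and \(\pi_0 I^{\mathrm{disc}}_{\mathbb{Q}/\mathbb{Z}}\cong(\mathbb{Q}/\mathbb{Z})^{\mathrm{disc}}\). Note that coconnectivity is preserved by discrete condensation, by the stated formula for its homotopy groups.

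\textbf{Effect on \(\pi_0\).} The key point is to identify \(\pi_0(h^{\mathrm{disc}})\). The map \(h\) corresponds to \(\id_{\mathbb{Q}/\mathbb{Z}}\) under \(\rho\), so by Theorem~\ref{thm:injective-lifts} the map \(\pi_0 h\) is the identity of \(\mathbb{Q}/\mathbb{Z}\) under the chosen identification. Since \(\pi_0\) commutes with \((-)^{\mathrm{disc}}\) naturally, \(\pi_0(h^{\mathrm{disc}})\) is the identity of \((\mathbb{Q}/\mathbb{Z})^{\mathrm{disc}}\). I expect the only real care to lie here: one must check that the identification \(\pi_n(E^{\mathrm{disc}})\cong(\pi_nE)^{\mathrm{disc}}\) is natural in \(E\), so that it applies to the morphism \(h\). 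This holds by the colimit formula for \(E^{\mathrm{disc}}(S)\) together with sheafification. The map \(\pi_0(H\iota)=\iota\) is a monomorphism. Hence on \(\pi_0\) the first map is \((\id,\iota)\), which is injective.

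\textbf{Reading off the long exact sequence.} For \(n\ge 1\), the groups \(\pi_n\) of the middle term and \(\pi_{n-1}\) of the left term both vanish. The latter holds when \(n\ge 2\); when \(n=1\) the boundary map goes into \(\pi_0\) of the left term, where the next map \((\id,\iota)\) is injective. Hence \(\pi_nI_{\mathbb{T}}=0\) for \(n>0\), which proves (1). In degree \(0\), injectivity of \((\id,\iota)\) and the vanishing of \(\pi_{-1}\) of the left term give a short exact sequence
\[
    0\to(\mathbb{Q}/\mathbb{Z})^{\mathrm{disc}}\xrightarrow{(\id,\iota)}(\mathbb{Q}/\mathbb{Z})^{\mathrm{disc}}\oplus\mathbb{T}\to\pi_0I_{\mathbb{T}}\to 0
\]
in the abelian category \(\Cond(\Ab)\). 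The cokernel of \((\id,\iota)\) is identified with \(\mathbb{T}\) via \((a,t)\mapsto t-\iota(a)\): this map is surjective with kernel exactly the image of \((\id,\iota)\). Equivalently, the composite \(\mathbb{T}\to\pi_0 I_{\mathbb{T}}\), induced by the right vertical map of the square, is an isomorphism. This is the canonical isomorphism in (2).
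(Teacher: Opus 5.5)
Your proposal is correct and follows essentially the same route as the paper. Both arguments take the cofiber sequence coming from the pushout, read off the long exact sequence using that the $\pi_0$-map $(\mathrm{id},\pm\iota)$ is a monomorphism, and identify its cokernel with $\mathbb{T}$. The differences are minor: the paper's sign convention is $(h^{\mathrm{disc}},-H\iota)$ with $\varphi(a,z)=\iota(a)+z$, and you spell out the coconnectivity of $I^{\mathrm{disc}}_{\mathbb{Q}/\mathbb{Z}}$ and the naturality giving $\pi_0(h^{\mathrm{disc}})=\mathrm{id}$, which the paper takes for granted.
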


\begin{proof}
The defining pushout of \(I_{\mathbb{T}}\) induces a cofiber sequence
\[
    H\left(
        \left(\mathbb{Q}/\mathbb{Z}\right)^{\mathrm{disc}}
    \right)
    \xrightarrow{\,(h^{\mathrm{disc}},-H\iota)\,}
    I_{\mathbb{Q}/\mathbb{Z}}^{\mathrm{disc}}
    \oplus H\mathbb{T}
    \longrightarrow
    I_{\mathbb{T}}.
\]

All terms before \(I_{\mathbb{T}}\) are coconnective, and the morphism on
degree-zero homotopy groups is
\[
    \left(
        \id,
        -\iota
    \right):
    \left(\mathbb{Q}/\mathbb{Z}\right)^{\mathrm{disc}}
    \lhook\joinrel\longrightarrow
    \left(\mathbb{Q}/\mathbb{Z}\right)^{\mathrm{disc}}
    \oplus \mathbb{T},
\]
which is a monomorphism.  Here we have used
\[
    \pi_0
    \left(
        I_{\mathbb{Q}/\mathbb{Z}}^{\mathrm{disc}}
    \right)
    \cong
    \left(\mathbb{Q}/\mathbb{Z}\right)^{\mathrm{disc}},
    \qquad
    \pi_0(h^{\mathrm{disc}})
    =
    \id_{
        \left(\mathbb{Q}/\mathbb{Z}\right)^{\mathrm{disc}}
    }.
\]

Thus the associated long exact sequence gives
\[
    \pi_m I_{\mathbb{T}}=0
    \qquad
    \text{for every }m>0.
\]
Similarly, since
\[
    \pi_{-1}
    H\left(
        \left(\mathbb{Q}/\mathbb{Z}\right)^{\mathrm{disc}}
    \right)
    =0,
\]
We get in degree zero:
\[
\begin{tikzcd}[column sep=large]
\left(\mathbb{Q}/\mathbb{Z}\right)^{\mathrm{disc}}
\arrow[r,"{(\id,-\iota)}"]
&
\left(\mathbb{Q}/\mathbb{Z}\right)^{\mathrm{disc}}
\oplus \mathbb{T}
\arrow[r]
&
\pi_0I_{\mathbb{T}}
\arrow[r]
&
0.
\end{tikzcd}
\].

Consider the morphism in \(\Cond(\Ab)\)
\[
    \varphi:
    \left(\mathbb{Q}/\mathbb{Z}\right)^{\mathrm{disc}}
    \oplus \mathbb{T}
    \longrightarrow
    \mathbb{T}
\]
defined by
\[
    \varphi(a,z)=\iota(a)+z.
\]
It is an epimorphism, since every \(z\in \mathbb{T}\) is the image of
\((0,z)\). Moreover,
\[
    \ker(\varphi)
    =
    \left\{
        \bigl(a,-\iota(a)\bigr)
        \,\middle|\,
        a\in
        \left(\mathbb{Q}/\mathbb{Z}\right)^{\mathrm{disc}}
    \right\},
\]
which is precisely the image of
\[
    (\id,-\iota):
    \left(\mathbb{Q}/\mathbb{Z}\right)^{\mathrm{disc}}
    \longrightarrow
    \left(\mathbb{Q}/\mathbb{Z}\right)^{\mathrm{disc}}
    \oplus \mathbb{T}.
\]
Therefore \(\varphi\) induces a canonical isomorphism
\[
    \coker(\id,-\iota)
    \xrightarrow{\ \sim\ }
    \mathbb{T}.
\]
It follows that
\[
    \pi_0I_{\mathbb{T}}\cong \mathbb{T}.
\]
\end{proof}

\section{Proving the Duality}
In this section we prove the main duality theorem. The dualizing spectrum was constructed in \Cref{sec:condensed-dualizing}; we now establish its universal property and deduce the resulting duality. The key ingredient is the coinduction functor right adjoint to the forgetful functor
\[
D(\Cond(\Ab))\longrightarrow \Cond(\Sp).
\]
We begin by introducing this coinduction functor and establishing several useful lemmas concerning it. We then formulate and prove the universal property of the dualizing spectrum, and finally use this universal property to deduce the desired duality.

\subsection{The Restriction--Coinduction Adjunction}
There is a canonical \(t\)-exact
equivalence
\[
  D(\Cond(\Ab))
  \simeq
  \Mod_{H(\mathbb{Z}^{\mathrm{disc}})}(\Cond(\Sp)).
\]
Under this equivalence, the derived tensor product and derived internal Hom
agree with the relative tensor product and internal Hom of
\(H(\mathbb{Z}^{\mathrm{disc}})\)-modules.

Consider the forgetful functor
\[
    U:
    D\bigl(\Cond(\Ab)\bigr)
    \simeq
    \Mod_{H(\mathbb{Z}^{\mathrm{disc}})}
    \bigl(\Cond(\Sp)\bigr)
    \longrightarrow
    \Cond(\Sp),
\]
and let
\[
    R(Y)
    :=
    \uHom_{\Cond(\Sp)}
    \bigl(H(\mathbb{Z}^{\mathrm{disc}}),Y\bigr)
\]
be its right adjoint.

\begin{lemma}[Enriched restriction--coinduction]
\label{lem:enriched-restriction-coinduction}
For \(M\in D(\Cond(\Ab))\) and \(Y\in\Cond(\Sp)\), there is a natural
equivalence
\[
  \uHom_{\Cond(\Sp)}(U(M),Y)
  \simeq
  U\left(\uHom_{D(\Cond(\Ab))}(M,R(Y))\right).
\]
\end{lemma}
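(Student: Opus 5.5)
We will prove the equivalence by the Yoneda lemma, comparing the mapping spaces out of an arbitrary test object \(Z\in\Cond(\Sp)\) into the two sides. Write \(\mathbb{Z}:=H(\mathbb{Z}^{\mathrm{disc}})\) for brevity. The left-hand side is characterized by
\[
\Map_{\Cond(\Sp)}\bigl(Z,\uHom_{\Cond(\Sp)}(U(M),Y)\bigr)\simeq \Map_{\Cond(\Sp)}\bigl(Z\otimes U(M),Y\bigr).
\]
For the right-hand side, we first use that \(U\) has a left adjoint, the free module functor \(F(Z)=\mathbb{Z}\otimes Z\). Then we apply the tensor--hom adjunction in \(\Mod_{\mathbb{Z}}(\Cond(\Sp))\), which is identified with the derived internal Hom of \(D(\Cond(\Ab))\) by the equivalence recalled above. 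This gives
\[
\Map\bigl(Z,U\uHom_{\mathbb{Z}}(M,R(Y))\bigr)\simeq \Map_{\mathbb{Z}}\bigl((\mathbb{Z}\otimes Z)\otimes_{\mathbb{Z}}M,R(Y)\bigr).
\]
Using the adjunction \(U\dashv R\), the last mapping space becomes \(\Map_{\Cond(\Sp)}\bigl(U((\mathbb{Z}\otimes Z)\otimes_{\mathbb{Z}}M),Y\bigr)\).

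The key step is the projection formula
\[
U\bigl((\mathbb{Z}\otimes Z)\otimes_{\mathbb{Z}}M\bigr)\simeq Z\otimes U(M),
\]
natural in \(Z\) and \(M\). It holds because the relative tensor product is computed as a geometric realization of a bar construction \(|\,(\mathbb{Z}\otimes Z)\otimes\mathbb{Z}^{\otimes\bullet}\otimes M\,|\). That bar construction is \(Z\otimes(-)\) applied to the bar resolution \(|\mathbb{Z}\otimes\mathbb{Z}^{\otimes\bullet}\otimes M|\simeq \mathbb{Z}\otimes_{\mathbb{Z}}M\simeq M\). The identification then follows since \(\otimes\) preserves colimits in each variable, as \(\Cond(\Sp)\) is presentably symmetric monoidal. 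Equivalently, one may cite the standard fact that for a commutative algebra \(A\) in a presentably symmetric monoidal \(\infty\)-category, \(A\otimes-\) is symmetric monoidal into \(\Mod_A\), with \(U\) lax monoidal and satisfying \(U(F(Z)\otimes_A M)\simeq Z\otimes U(M)\) \cite{HA}.

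Combining these identifications shows that the two representable functors of \(Z\) agree, naturally in \(Z\), \(M\) and \(Y\), so Yoneda yields the stated equivalence. The main obstacle is bookkeeping rather than substance: every equivalence in the chain must be natural in \(Z\), so the Yoneda argument produces a genuine equivalence of condensed spectra and not just an objectwise agreement of mapping spaces. One must also check that the internal Hom of \(D(\Cond(\Ab))\) agrees with the module internal Hom over \(\mathbb{Z}\), which we take from the equivalence stated at the start of this subsection. I would handle naturality by phrasing each step as an equivalence of functors \(\Cond(\Sp)^{\mathrm{op}}\to\mathcal S\) obtained by composing adjunction equivalences.
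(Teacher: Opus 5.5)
Your proposal is correct and follows the paper's proof. Both arguments apply Yoneda on a test object \(Z\), then the free--forgetful adjunction with \(H(\mathbb{Z}^{\mathrm{disc}})\otimes Z\), then the module tensor--hom adjunction, and finally \(U\dashv R\) together with the projection formula \(U\bigl((H(\mathbb{Z}^{\mathrm{disc}})\otimes Z)\otimes_{H(\mathbb{Z}^{\mathrm{disc}})}M\bigr)\simeq Z\otimes U(M)\). The paper uses the projection formula without comment, so your bar-construction argument only spells out a step it leaves implicit.
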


\begin{proof}
Let \(Z\in\Cond(\Sp)\).  The free
\(H(\mathbb{Z}^{\mathrm{disc}})\)-module on \(Z\) is
\(H(\mathbb{Z}^{\mathrm{disc}})\otimes Z\). Thus,
\[
\begin{aligned}
&\Map_{\Cond(\Sp)}
\left(Z,U\left(\uHom_{D(\Cond(\Ab))}(M,R(Y))\right)\right)\\
&\quad\simeq
\Map_{D(\Cond(\Ab))}
\left(H(\mathbb{Z}^{\mathrm{disc}})\otimes Z,
      \uHom_{D(\Cond(\Ab))}(M,R(Y)) \right)\\
&\quad\simeq
\Map_{D(\Cond(\Ab))}
\left((H(\mathbb{Z}^{\mathrm{disc}})\otimes Z)
      \otimes_{H(\mathbb{Z}^{\mathrm{disc}})}M,
      R(Y)\right)\\
&\quad\simeq
\Map_{\Cond(\Sp)}(Z\otimes U(M),Y)\\
&\quad\simeq
\Map_{\Cond(\Sp)}
\left(Z,\uHom_{\Cond(\Sp)}(U(M),Y)\right).
\end{aligned}
\]
The equivalence is natural in \(Z\), and hence the claim follows by Yoneda.

\end{proof}

\begin{lemma}
\label{lem:coinduction-brown-comenetz}
Let
\[
  h:H(\mathbb{Q}/\mathbb{Z})
  \longrightarrow I_{\mathbb{Q}/\mathbb{Z}}
\]
be the morphism corresponding to
\(\id_{\mathbb{Q}/\mathbb{Z}}\), and let \(h^{\mathrm{disc}}\) denote its
discrete condensation. Then the adjoint of \(h^{\mathrm{disc}}\),
\[
    \widetilde{h}:
    H\left(
        \left(\mathbb{Q}/\mathbb{Z}\right)^{\mathrm{disc}}
    \right)
    \longrightarrow
    R\left(
        I_{\mathbb{Q}/\mathbb{Z}}^{\mathrm{disc}}
    \right),
\]
is an equivalence in \(D\bigl(\Cond(\Ab)\bigr)\).
\end{lemma}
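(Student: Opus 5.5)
Since \(D(\Cond(\Ab))\) is hypercomplete with homotopy objects detecting equivalences, we aim to show that \(R\bigl(I_{\mathbb{Q}/\mathbb{Z}}^{\mathrm{disc}}\bigr)\) is discrete in degree zero with \(\pi_0\cong(\mathbb{Q}/\mathbb{Z})^{\mathrm{disc}}\), and that \(\widetilde h\) induces the identity there. Homotopy objects are sheafifications of sectionwise homotopy groups, so we compute sections. For a profinite set \(S\), write \(\mathbb{Z}[S]^{\mathrm{disc}}\)-type notation aside and use the adjunction defining the internal Hom:
\[
R\bigl(I_{\mathbb{Q}/\mathbb{Z}}^{\mathrm{disc}}\bigr)(S)
\simeq
\uHom_{\Cond(\Sp)}\bigl(H(\mathbb{Z}^{\mathrm{disc}}),I_{\mathbb{Q}/\mathbb{Z}}^{\mathrm{disc}}\bigr)(S).
\]
The key step is to identify this with the ordinary mapping spectrum \(\uHom_{\Sp}\bigl(H\mathbb{Z},I_{\mathbb{Q}/\mathbb{Z}}^{\mathrm{disc}}(S)\bigr)\). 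To do so, we note that \(H(\mathbb{Z}^{\mathrm{disc}})=(H\mathbb{Z})^{\mathrm{disc}}\) is discrete. For a discrete source \(E^{\mathrm{disc}}\), the internal Hom evaluated at \(S\) equals \(\Map(E^{\mathrm{disc}}\otimes \mathbb{S}[S],Y)\). We would argue that \(\uHom(E^{\mathrm{disc}},Y)(S)\simeq \uHom_{\Sp}(E,Y(S))\), using that \(E^{\mathrm{disc}}\) is the colimit of constant sheaves on the cells of \(E\), with \(S\)-sections computed via the global-section adjunction applied to the restriction to \(S\) (the slice over \(S\) is again a condensed topos). This is the step I expect to be the main obstacle: it needs care with hypercompleteness and with the fact that \(\uHom\) out of a colimit becomes a limit, but the formula \(\Map(E^{\mathrm{disc}}\otimes\mathbb{S}[S],Y)\simeq\Map_{\Sp}(E,\Gamma(S,Y))\) follows directly from \((-)^{\mathrm{disc}}\) being left adjoint to global sections, applied over the slice.

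With this in hand, \Cref{lem:discrete-BC-sections} gives \(I_{\mathbb{Q}/\mathbb{Z}}^{\mathrm{disc}}(S)\simeq I_{B_S}\), which is \(t\)-injective with \(B_S\) injective by \Cref{lem:disc-QZ-injective}. By \Cref{thm:injective-lifts} applied to \(\Sigma^{n}H\mathbb{Z}\), we get
\[
\pi_n\uHom_{\Sp}(H\mathbb{Z},I_{B_S})\cong[\pi_{-n}H\mathbb{Z},B_S]_{\Ab},
\]
which is \(B_S\) for \(n=0\) and vanishes otherwise. These identifications are natural in \(S\), since they come from the natural \(\rho\) maps. After sheafification, \(R\bigl(I_{\mathbb{Q}/\mathbb{Z}}^{\mathrm{disc}}\bigr)\) is concentrated in degree \(0\) with \(\pi_0\) given by the sheaf \(S\mapsto B_S\), namely \((\mathbb{Q}/\mathbb{Z})^{\mathrm{disc}}\).

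Finally, we check \(\pi_0(\widetilde h)\). The adjoint \(\widetilde h\) composed with the counit \(UR\to\mathrm{id}\) recovers \(h^{\mathrm{disc}}\). On \(S\)-sections, the degree-zero map \(B_S\to[\mathbb{Z},B_S]\cong B_S\) is therefore the map determined by \(\pi_0(h^{\mathrm{disc}})(S)\), which is the identity because \(\pi_0 h=\id_{\mathbb{Q}/\mathbb{Z}}\) and \(\pi_0\) commutes with discrete condensation. Since \(\widetilde h\) is an isomorphism on all homotopy objects, it is an equivalence.
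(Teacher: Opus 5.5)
Your proposal is correct and follows essentially the same route as the paper. Both arguments identify $R\bigl(I_{\mathbb{Q}/\mathbb{Z}}^{\mathrm{disc}}\bigr)(S)$ with $\uHom_{\Sp}(H\mathbb{Z}, I_{B_S})$, use the Brown--Comenetz universal property to see that it is concentrated in degree $0$ with $\pi_0\cong B_S$, and check that $\widetilde{h}$ induces the identity there. The differences are cosmetic: you work with sheafified homotopy objects and the counit, where the paper works pointwise with the unit, and your slice-adjunction argument justifies $\uHom_{\Cond(\Sp)}(E^{\mathrm{disc}},Y)(S)\simeq\uHom_{\Sp}(E,Y(S))$, which the paper only asserts.
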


\begin{proof}
We will show that \(\widetilde{h}\) is an equivalence by checking it pointwise. Let \(S\) be a profinite set and define
\[
    B_S := (\mathbb{Q}/\mathbb{Z})^{\mathrm{disc}}(S) = \operatorname*{colim}_{S\twoheadrightarrow S_0} (\mathbb{Q}/\mathbb{Z})^{S_0}.
\]
By \Cref{lem:discrete-BC-sections}, we have \(I_{\mathbb{Q}/\mathbb{Z}}^{\mathrm{disc}}(S) \simeq I_{B_S}\). Combining this with the analogous identification \(H(\mathbb{Q}/\mathbb{Z})^{\mathrm{disc}}(S) \simeq H B_S\), the evaluation of \(h^{\mathrm{disc}}\) at \(S\) yields a map
\[
    h^{\mathrm{disc}}(S): H B_S \longrightarrow I_{B_S}.
\]
We claim this map is precisely the Brown--Comenetz morphism associated with \(\id_{B_S}\). By definition of the discrete condensation, \(h^{\mathrm{disc}}(S)\) is given by the colimit \(\operatorname*{colim}_{S\twoheadrightarrow S_0} h^{S_0}\). Taking the induced map on \(\pi_0\), we obtain the commutative diagram
\[
\begin{tikzcd}[column sep=large]
B_S 
    \arrow[r, "\sim"] 
    \arrow[d, "\pi_0(h^{\mathrm{disc}}(S))"'] 
& 
\displaystyle \operatorname*{colim}_{S\twoheadrightarrow S_0} (\mathbb{Q}/\mathbb{Z})^{S_0} 
    \arrow[d, "\operatorname*{colim}(\id_{\mathbb{Q}/\mathbb{Z}})^{S_0}"] 
\\
B_S 
& 
\displaystyle \operatorname*{colim}_{S\twoheadrightarrow S_0} (\mathbb{Q}/\mathbb{Z})^{S_0} 
    \arrow[l, "\sim"']
\end{tikzcd}
\]
which confirms that \(\pi_0(h^{\mathrm{disc}}(S))\) is the identity on \(B_S\), proving the claim.

Next, we analyze the target of \(\widetilde{h}(S)\). We have
\[
    R\left( I_{\mathbb{Q}/\mathbb{Z}}^{\mathrm{disc}} \right)(S) 
    \simeq \underline{\operatorname{Hom}}_{\Sp} \left( H\mathbb{Z}, I_{\mathbb{Q}/\mathbb{Z}}^{\mathrm{disc}}(S) \right) 
    \simeq \underline{\operatorname{Hom}}_{\Sp} \left( H\mathbb{Z}, I_{B_S} \right).
\]
The homotopy groups of this mapping spectrum are given by
\[
\begin{aligned}
    \pi_k \underline{\operatorname{Hom}}_{\Sp} \left(H\mathbb{Z}, I_{B_S}\right) 
    &\cong \left[ \Sigma^k H\mathbb{Z}, I_{B_S} \right]_{\Sp} \\
    &\cong \operatorname{Hom}_{\Ab} \left( \pi_{-k}H\mathbb{Z}, B_S \right) \\
    &\cong 
    \begin{cases}
        B_S, & k=0,\\
        0,   & k\neq0.
    \end{cases}
\end{aligned}
\]
It follows that \(\underline{\operatorname{Hom}}_{\Sp}(H\mathbb{Z}, I_{B_S}) \simeq HB_S\) as an \(H\mathbb{Z}\)-module. 

Under these identifications, the pointwise adjoint
\[
    \widetilde{h}(S): HB_S \longrightarrow \underline{\operatorname{Hom}}_{\Sp} \left(H\mathbb{Z}, I_{B_S}\right) \simeq HB_S
\]
is determined by the unit \(\eta_{HB_S}\) of the adjunction via \(\widetilde{h}(S) = h^{\mathrm{disc}}(S)_\ast \circ \eta_{HB_S}\). Evaluating this on \(\pi_0\) yields the commutative diagram
\[\begin{tikzcd}[column sep=large]
    {\pi_0(HB_S)} 
        \arrow[r, "\pi_0(\eta_{HB_S})"] 
        \arrow[d, equal] 
    & {[H\mathbb{Z}, HB_S]_{\Sp}} 
        \arrow[r, "\pi_0(h^{\mathrm{disc}}(S))_\ast"] 
        \arrow[d, "\sim", "\rho_{H\mathbb{Z}, HB_S}"'] 
    & {[H\mathbb{Z}, I_{B_S}]_{\Sp}} 
        \arrow[d, "\sim"',"\rho_{H\mathbb{Z}, I_{B_S}}"] 
    \\
    {B_S} 
        \arrow[r, equal] 
    & {[\mathbb{Z}, B_S]_{\Ab}} 
        \arrow[r, "\sim","(\id_{B_S})_\ast"'] 
    & {[\mathbb{Z}, B_S]_{\Ab}}
\end{tikzcd}\]
which demonstrates that \(\pi_0(\widetilde{h}(S)) = \id_{B_S}\).

Because both the source and target spectra are Eilenberg--MacLane spectra (and are thus concentrated entirely in degree zero), the fact that \(\widetilde{h}(S)\) induces an isomorphism on \(\pi_0\) guarantees that it is an equivalence of spectra.

Since this pointwise equivalence holds naturally for all profinite sets \(S\), \(\widetilde{h}\) is an equivalence. 
\end{proof}

\begin{lemma}
\label{lem:coinduction-of-condensed-dualizing-spectrum}

There is a canonical equivalence
\[
    R(I_{\mathbb{T}})\simeq H\mathbb{T}
\]

in \(D\bigl(\Cond(\Ab)\bigr)\).
\end{lemma}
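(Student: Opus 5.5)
Since $R$ is a right adjoint between stable $\infty$-categories, it is exact. I will therefore apply $R$ to the defining pushout square of $I_{\mathbb T}$ in \Cref{def:condensed-pontryagin-dualizing-spectrum}, which is also a pullback by stability. This produces a pushout square in $D(\Cond(\Ab))$ with corners
\[
R\bigl(H((\mathbb{Q}/\mathbb{Z})^{\mathrm{disc}})\bigr),\quad R(H\mathbb T),\quad R(I^{\mathrm{disc}}_{\mathbb{Q}/\mathbb{Z}}),\quad R(I_{\mathbb T}).
\]
The lower-left corner is already known: by \Cref{lem:coinduction-brown-comenetz}, $\widetilde h$ identifies $R(I^{\mathrm{disc}}_{\mathbb{Q}/\mathbb{Z}})$ with $H((\mathbb{Q}/\mathbb{Z})^{\mathrm{disc}})$. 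For the two Eilenberg--Mac Lane corners, I will use that every $HA$ with $A\in\Cond(\Ab)$ is of the form $U(A[0])$. Then \Cref{lem:enriched-restriction-coinduction} (or simply the adjunction) gives
\[
R(U(A))\simeq \uHom_{D(\Cond(\Ab))}\bigl(H(\mathbb Z^{\mathrm{disc}})\otimes H(\mathbb Z^{\mathrm{disc}}),A\bigr),
\]
viewed as a module via one factor. Evaluated at each $S$, this is $\uHom_{\Sp}(H\mathbb Z, HA(S))$, which is the $H\mathbb Z$-cohomology of $H\mathbb Z$, namely the dual Steenrod algebra with coefficients in $A$. This corner is therefore not simply $HA$.

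To avoid computing it, I will not compare corners termwise. Instead I will compare cofibers. The cofiber of the left vertical map $h^{\mathrm{disc}}$ equals the cofiber $C$ of the right vertical map $H\mathbb T\to I_{\mathbb T}$. Since $R$ is exact, $R(C)$ is the cofiber of $R(H((\mathbb{Q}/\mathbb{Z})^{\mathrm{disc}}))\to R(I^{\mathrm{disc}}_{\mathbb{Q}/\mathbb{Z}})$. Cleaner still, I will write $H\mathbb T=U(\mathbb T)$ and $H((\mathbb{Q}/\mathbb{Z})^{\mathrm{disc}})=U((\mathbb{Q}/\mathbb{Z})^{\mathrm{disc}})$ and use the counit and unit. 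The $D(\Cond(\Ab))$-pushout
\[
P:=\mathbb T\sqcup_{(\mathbb{Q}/\mathbb{Z})^{\mathrm{disc}}}H((\mathbb{Q}/\mathbb{Z})^{\mathrm{disc}})
\]
along $\widetilde h$ and $\iota$ is just $\mathbb T$, because $\widetilde h$ is an equivalence. The candidate map is then obtained by adjunction from the canonical map $U(\mathbb T)=H\mathbb T\to I_{\mathbb T}$, giving
\[
\theta\colon \mathbb T\longrightarrow R(I_{\mathbb T}).
\]

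To prove that $\theta$ is an equivalence, I will apply $R$ to the pushout, giving
\[
R(I_{\mathbb T})\simeq R(I^{\mathrm{disc}}_{\mathbb{Q}/\mathbb{Z}})\sqcup_{RU((\mathbb{Q}/\mathbb{Z})^{\mathrm{disc}})}RU(\mathbb T).
\]
Consider the unit maps $M\to RU(M)$. The map $\theta$ is induced on pushouts by the unit maps $\eta_{\mathbb T}$ and $\eta_{(\mathbb{Q}/\mathbb{Z})^{\mathrm{disc}}}$ together with $\widetilde h$. Since $\widetilde h=R(h^{\mathrm{disc}})\circ\eta$ is an equivalence, $\theta$ is an equivalence if and only if the square formed by $\eta_{(\mathbb{Q}/\mathbb{Z})^{\mathrm{disc}}}$, $\eta_{\mathbb T}$, $RU(\iota)$ and $\iota$ is a pushout. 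Equivalently, the induced map on cofibers of the units must be an equivalence:
\[
\mathrm{cof}\bigl(\eta_{(\mathbb{Q}/\mathbb{Z})^{\mathrm{disc}}}\bigr)\xrightarrow{\ \sim\ }\mathrm{cof}\bigl(\eta_{\mathbb T}\bigr).
\]
The cofiber of $\eta_A$ is $\uHom(\overline{H\mathbb Z\otimes H\mathbb Z},A)$, where the bar denotes the fiber of the multiplication map to $H\mathbb Z$. Each homotopy group of $H\mathbb Z\otimes H\mathbb Z$ in positive degrees is a finite direct sum of finite groups $\mathbb Z/p$, and its $\pi_0$ is $\mathbb Z$. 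So the cofiber is built, via a Postnikov filtration, from terms $\RHom(\mathbb Z/p,A)[k]$, at each $S$ and hence as condensed objects.

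The main step is therefore to show that $\iota\colon(\mathbb{Q}/\mathbb{Z})^{\mathrm{disc}}\to\mathbb T$ induces an equivalence on $\RHom_{\Cond(\Ab)}(\mathbb Z/p,-)$. This follows from the short exact sequence $0\to(\mathbb{Q}/\mathbb{Z})^{\mathrm{disc}}\to\mathbb T\to\mathbb T/(\mathbb{Q}/\mathbb{Z})\to 0$ once one knows that $\RHom(\mathbb Z/p,\mathbb T/(\mathbb{Q}/\mathbb{Z}))=0$. For that, the cokernel is uniquely $p$-divisible: multiplication by $p$ is an isomorphism on $\mathbb R/\mathbb Q$ as a condensed group, since $\mathbb R\to\mathbb R$ is one and $\mathbb Q^{\mathrm{disc}}\to\mathbb Q^{\mathrm{disc}}$ is one. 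Hence $\RHom(\mathbb Z/p,-)$ vanishes by the sequence $\mathbb Z\xrightarrow{p}\mathbb Z$.

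Two points need care. The first is convergence of the Postnikov filtration: the tower is bounded below and $\uHom$ converts it to a limit, where each stage of the limit is controlled. The second is the use of hypercompleteness to pass from pointwise statements to condensed ones. I expect the convergence argument to be the main technical obstacle.
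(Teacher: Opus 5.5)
Your argument is correct. Up to the reduction it runs parallel to the paper's: both use exactness of $R$ and \Cref{lem:coinduction-brown-comenetz}, and your pasting step is sound ($\theta$ is an equivalence iff the square formed by the units $\eta_{(\mathbb{Q}/\mathbb{Z})^{\mathrm{disc}}}$ and $\eta_{\mathbb{T}}$ is a pushout).

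The difference is which cofibers of that square you compare. The paper takes the horizontal ones. It maps $(\mathbb{Q}/\mathbb{Z})^{\mathrm{disc}}\to\mathbb{T}\to V$ to $R(I^{\mathrm{disc}}_{\mathbb{Q}/\mathbb{Z}})\to R(I_{\mathbb{T}})\to R(HV)$, so only $\eta_{HV}$ remains. That is dispatched in one line: $V$ is rational, $\mathbb{S}^{\mathrm{disc}}\to H(\mathbb{Z}^{\mathrm{disc}})$ is a rational equivalence, and rationalization is smashing. You take the vertical ones, $\operatorname{cof}(\eta_A)\simeq\uHom(F,A)$ with $F=\operatorname{fib}(H\mathbb{Z}\otimes H\mathbb{Z}\to H\mathbb{Z})$. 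You then conclude by a Postnikov d\'evissage of $F$ together with $\RHom(\mathbb{Z}/p,V)=0$.

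These are the same condition in disguise: the cofiber of $\uHom(F,(\mathbb{Q}/\mathbb{Z})^{\mathrm{disc}})\to\uHom(F,\mathbb{T})$ is $\uHom(F,V)\simeq\operatorname{cof}(\eta_V)$. Your version needs more input, namely that $\pi_{>0}(H\mathbb{Z}\otimes H\mathbb{Z})$ is torsion (whether the groups are elementary abelian is irrelevant), plus a convergence check. The paper needs only that $F$ is rationally trivial.

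The convergence you flag as the main obstacle is harmless. For $A$ in the heart, $\uHom(\tau_{\geq N}F,A)$ lies in degrees $\leq -N$, so each homotopy group sees only finitely many layers. Better still, $F$ rationally trivial and $V$ rational give $\uHom(F,V)=0$ directly, which is exactly the paper's argument. Likewise no pointwise or hypercompleteness argument is needed, since $\uHom(\mathbb{Z}/p,V)\simeq\operatorname{fib}(p\colon V\to V)=0$ holds internally in $D(\Cond(\Ab))$.
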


\begin{proof}
Let
\[
    V
    :=
    \coker_{\Cond(\Ab)}
    \left(
        \left(\mathbb{Q}/\mathbb{Z}\right)^{\mathrm{disc}}
        \longrightarrow
        \mathbb{T}
    \right).
\]

Thus there is a short exact sequence in
\(\Cond(\Ab)\)
\[
    0
    \longrightarrow
    \left(\mathbb{Q}/\mathbb{Z}\right)^{\mathrm{disc}}
    \longrightarrow
    \mathbb{T}
    \longrightarrow
    V
    \longrightarrow
    0,
\]
and hence a cofiber sequence
\[
    H\left(
        \left(\mathbb{Q}/\mathbb{Z}\right)^{\mathrm{disc}}
    \right)
    \longrightarrow
    H\mathbb{T}
    \longrightarrow
    HV
\]
in \(D(\Cond(\Ab))\), and therefore also in
\(\Cond(\Sp)\) after applying the forgetful functor.

Now consider the defining pushout square of \(I_{\mathbb{T}}\). By the
pasting law for pushouts, together with the above cofiber sequence, we
obtain the diagram
\[
\begin{tikzcd}
    {H\left((\mathbb{Q}/\mathbb{Z})^{\mathrm{disc}}\right)}
        & {H\mathbb{T}} \\
    {I_{\mathbb{Q}/\mathbb{Z}}^{\mathrm{disc}}}
        & {I_{\mathbb{T}}} \\
    0 & HV
    \arrow["{H\iota}", from=1-1, to=1-2]
    \arrow["{h^{\mathrm{disc}}}"', from=1-1, to=2-1]
    \arrow[from=1-2, to=2-2]
    \arrow[from=2-1, to=2-2]
    \arrow[from=2-1, to=3-1]
    \arrow["\lrcorner"{anchor=center, pos=0.125, rotate=180},
        draw=none, from=2-2, to=1-1]
    \arrow[from=2-2, to=3-2]
    \arrow[from=3-1, to=3-2]
    \arrow["\lrcorner"{anchor=center, pos=0.125, rotate=180},
        draw=none, from=3-2, to=2-1]
\end{tikzcd}
\]
where both squares are pushouts. In particular, we obtain a cofiber
sequence
\[
    I_{\mathbb{Q}/\mathbb{Z}}^{\mathrm{disc}}
    \longrightarrow
    I_{\mathbb{T}}
    \longrightarrow
    HV
\]
in \(\Cond(\Sp)\).

We can therefore arrange the two cofiber sequences into the following
commutative diagram in \(\Cond(\Sp)\):
\[
\begin{tikzcd}
    {H\left((\mathbb{Q}/\mathbb{Z})^{\mathrm{disc}}\right)}
        & {H\mathbb{T}} & HV \\
    {I_{\mathbb{Q}/\mathbb{Z}}^{\mathrm{disc}}}
        & {I_{\mathbb{T}}} & HV
    \arrow["{H\iota}", from=1-1, to=1-2]
    \arrow["{h^{\mathrm{disc}}}"', from=1-1, to=2-1]
    \arrow[from=1-2, to=1-3]
    \arrow[from=1-2, to=2-2]
    \arrow[equal, from=1-3, to=2-3]
    \arrow[from=2-1, to=2-2]
    \arrow[from=2-2, to=2-3]
\end{tikzcd}
\]

Applying the adjunction \(U\dashv R\) gives the corresponding diagram
in \(D(\Cond(\Ab))\):
\[
\begin{tikzcd}[column sep=large,row sep=large]
H\left(
    \left(\mathbb{Q}/\mathbb{Z}\right)^{\mathrm{disc}}
\right)
\arrow[r]
\arrow[d,"\widetilde{h}"']
&
H\mathbb{T}
\arrow[r]
\arrow[d]
&
HV
\arrow[d,"\eta_{HV}"]
\\
R\left(
    I_{\mathbb{Q}/\mathbb{Z}}^{\mathrm{disc}}
\right)
\arrow[r]
&
R(I_{\mathbb{T}})
\arrow[r]
&
R(HV).
\end{tikzcd}
\]
Since \(R\) is a right adjoint between stable \(\infty\)-categories, it
is exact. Hence the bottom row is again a cofiber sequence.

The left vertical map is an equivalence by
\Cref{lem:coinduction-brown-comenetz}. We will show below that
\(\eta_{HV}\) is also an equivalence. Granting this for the moment,
the two-out-of-three property for morphisms of cofiber sequences gives
\[
    H\mathbb{T}
    \xrightarrow{\ \sim\ }
    R(I_{\mathbb{T}}).
\]
It remains to prove that \(\eta_{HV}\) is an equivalence.

We first identify \(V\) as a cokernel in condensed abelian groups:
\[
    V
    \cong
    \coker_{\Cond(\Ab)}
    \bigl(\mathbb{Q}^{\mathrm{disc}}\longrightarrow\mathbb{R}\bigr).
\]
Indeed, quotienting by the common subgroup
\(\mathbb{Z}^{\mathrm{disc}}\) identifies the defining map with
\[
    (\mathbb{Q}/\mathbb{Z})^{\mathrm{disc}}
    \longrightarrow \mathbb{T}.
\]
Both \(\mathbb{Q}^{\mathrm{disc}}\) and \(\mathbb{R}\) are condensed
\(\mathbb{Q}\)-vector spaces, and the inclusion is
\(\mathbb{Q}\)-linear. Hence \(V\) is a condensed
\(\mathbb{Q}\)-vector space, so \(HV\) is a rational condensed
spectrum.

Put
\[
    \mathcal C:=\Cond(\Sp),
    \qquad
    A_0:=H(\mathbb{Z}^{\mathrm{disc}}).
\]
The unit
\[
    u:\mathbb{S}^{\mathrm{disc}}\longrightarrow A_0
\]
is a rational equivalence, since
\(\mathbb{S}_{\mathbb{Q}}\simeq H\mathbb{Q}\).
Rationalization is smashing, so internal Hom into a rational
object inverts rational equivalences in its first variable.
Consequently, precomposition with \(u\) gives an equivalence
\[
\begin{aligned}
    e:\ U R(HV)
    &=
    \uHom_{\mathcal C}(A_0,HV)\\
    &\xrightarrow{\ u^*\ }
    \uHom_{\mathcal C}(\mathbb{S}^{\mathrm{disc}},HV)
    \simeq HV.
\end{aligned}
\]

The underlying map of the adjunction unit
\(\eta_{HV}\) is adjoint to the \(A_0\)-module action on \(HV\).
Unitality of this action therefore gives
\[
    e\circ U(\eta_{HV})=\id_{HV}.
\]
Since \(e\) is an equivalence, \(U(\eta_{HV})\) is its inverse.
The forgetful functor \(U\) is conservative, so \(\eta_{HV}\)
is an equivalence. This completes the proof.
\end{proof}

\subsection{The universal property of the dualizing spectrum}

We can now state the universal property of the dualizing spectrum constructed above, which captures the main properties we sought.

\begin{notation}
For a condensed spectrum \(Y\), write
\[
    \widehat{Y}
    :=
    \uHom_{\Cond(\Sp)}
    \left(Y,I_{\mathbb{T}}\right).
\].
For a condensed abelian group \(B\), write
\[
    \widehat{B}
    :=
    \uHom_{\Cond(\Ab)}
    \left(B,\mathbb{T}\right).
\].

\end{notation}

\begin{proposition}[The universal property of the dualizing spectrum]
\label{prop:univ-prop-dualizing-spec}
Let
\[
    X\in\Cond(\Sp)_{\mathrm{lc}}
\]
be a locally compact condensed spectrum. Then the natural morphism of
condensed abelian groups
\[
    \rho_X:
    \pi_0\widehat{X}
    \longrightarrow
    \widehat{\pi_0 X}
\]
is an isomorphism. Here \(\rho_X\) is adjoint to the pairing
\[
\begin{aligned}
  \pi_0\uHom_{\Cond(\Sp)}(X,I_{\mathbb{T}})\otimes\pi_0X
  &\longrightarrow
  \pi_0\left(
    \uHom_{\Cond(\Sp)}(X,I_{\mathbb{T}})\otimes X
  \right)\\
  &\xrightarrow{\ \pi_0(\operatorname{ev}_X)\ }
  \pi_0I_{\mathbb{T}}
  \xrightarrow{\ \sim\ }\mathbb{T}.
\end{aligned}
\]
\end{proposition}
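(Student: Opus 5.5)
We use \Cref{lem:enriched-restriction-coinduction} to move the whole computation into $D(\Cond(\Ab))$, where \Cref{lem:coinduction-of-condensed-dualizing-spectrum} and \Cref{cor:condab-ext-circle} do the work. Write $X=U(M)$ for the underlying spectrum of $M:=H(\mathbb{Z}^{\mathrm{disc}})\otimes X$? This does not quite fit, since $X$ is not itself a module. So we instead compare $X$ with its $H(\mathbb{Z}^{\mathrm{disc}})$-linearization only on $\pi_0$, using the connectivity of the target.

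\textbf{Step 1 (reduction to connective covers).} By \Cref{prop:pi-zero-condensed-pontryagin-dualizing-spectrum}, $I_{\mathbb T}\in\Cond(\Sp)_{\leq 0}$. Hence $\uHom(\tau_{\geq 1}X,I_{\mathbb T})$ lies in degrees $\leq -1$, so the fiber sequence $\tau_{\ge1}X\to X\to\tau_{\le0}X$ shows that $\pi_0\widehat X\cong\pi_0\widehat{\tau_{\le 0}X}$. Dually, write $\tau_{\le 0}X$ as an extension of $H\pi_0X$ by $\tau_{\le -1}X$. Then $\uHom(\tau_{\le-1}X,I_{\mathbb T})$ has $\pi_0$ and $\pi_{-1}$ to control; these are the groups $\pi_0$ and $\pi_{-1}$ of maps out of a $(-1)$-coconnective object into a coconnective one. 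This direction does not vanish formally. So we instead reduce first to $X$ connective-truncated below, i.e.\ we compare with $H\pi_0X$ through the long exact sequence. The needed vanishing is $\pi_0\widehat{Y}=0=\pi_{1}\widehat{Y}$ for $Y=\tau_{\le -1}X$, and we expect this to follow from the computation in Step 2 applied to the Postnikov pieces together with a limit argument using hypercompleteness.

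\textbf{Step 2 (the Eilenberg--Mac Lane case).} For $A\in\LCA$, \Cref{lem:enriched-restriction-coinduction} gives
\[
\uHom_{\Cond(\Sp)}(HA,I_{\mathbb T})\simeq U\,\uHom_{D(\Cond(\Ab))}(HA,R(I_{\mathbb T}))\simeq U\,\RHom_{\Cond(\Ab)}(A,\mathbb T),
\]
using \Cref{lem:coinduction-of-condensed-dualizing-spectrum}. By \Cref{cor:condab-ext-circle} this is $H\widehat A$, concentrated in degree $0$. Shifting, $\widehat{\Sigma^k HA}$ is concentrated in degree $-k$. In particular, every bounded Postnikov piece of $\tau_{\le -1}X$ has dual concentrated in degrees $\ge 1$, so its $\pi_0$ and $\pi_1$ contributions vanish except possibly at $\pi_1$ from the degree $-1$ layer. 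We therefore sharpen Step 1: the fiber sequence $\tau_{\le -1}X\to\tau_{\le 0}X\to H\pi_0X$ yields $\pi_0\widehat{H\pi_0X}\to\pi_0\widehat{\tau_{\le0}X}\to\pi_0\widehat{\tau_{\le-1}X}$, preceded by $\pi_1\widehat{\tau_{\le -1}X}\to\pi_0\widehat{H\pi_0X}$. The right term is $0$. The left connecting term lands in $\Ext^1$-type groups, and we identify it with $\pi_{-1}$ of $\widehat{H\pi_0 X}$, which is $\Ext^1(\pi_0X,\mathbb T)=0$ by \Cref{cor:condab-ext-circle}. More precisely, we will first check that $\pi_1\widehat{\tau_{\le-1}X}\to\pi_0\widehat{H\pi_0X}$ is zero by factoring it through $\pi_0$ of maps $\Sigma^{-1}H\pi_0X\to\Sigma\,\ldots$. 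Since $\tau_{\le -1}X=\lim_n\tau_{\ge -n}\tau_{\le-1}X$ and $\uHom(-,I_{\mathbb T})$ turns the colimit presentation $\tau_{\le -1}X=\operatorname{colim}$ of truncations into limits, we expect Milnor sequences to show $\widehat{\tau_{\le-1}X}\in\Cond(\Sp)_{\ge 1}$.

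\textbf{Step 3 (identifying the map).} Having $\pi_0\widehat X\cong\pi_0\widehat{H\pi_0X}\cong\widehat{\pi_0X}$, we check that the composite agrees with $\rho_X$. This follows by naturality of the evaluation pairing along $X\to H\pi_0X$ (more precisely $X\to\tau_{\le0}X\leftarrow\ldots$) and the fact that, for $HA$, the adjunction identification pairs with $\pi_0 I_{\mathbb T}\cong\mathbb T$ via the counit $UR(I_{\mathbb T})\to I_{\mathbb T}$, which is the canonical map $H\mathbb T\to I_{\mathbb T}$ inducing the isomorphism on $\pi_0$.

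The main obstacle is Step 2's treatment of $\tau_{\le-1}X$, which is unbounded below. Writing $\tau_{\le -1}X$ as a colimit of the bounded pieces $\tau_{\ge -n}$ requires $\tau_{\le-1}X\simeq\operatorname{colim}_n\tau_{\ge-n}\tau_{\le -1}X$, which holds in any $t$-structure compatible with filtered colimits, as here. Then $\widehat{(-)}$ converts this into a limit, and we need the $\lim^1$ terms in degrees $0,1$ to vanish. Because each bounded dual sits in degrees $\ge1$ and the transition maps are surjective on the relevant $\pi_*$, being split by degree, this should work, but it needs care in $\Cond(\Ab)$, where countable limits of surjections remain surjective because of the profinite site.
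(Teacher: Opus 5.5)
\textbf{Gap: the truncation sequence in Steps 1--2 is oriented the wrong way, so the argument does not close as written.} Your overall plan is the paper's plan: truncate using $I_{\mathbb T}\le 0$, compute the Eilenberg--Mac Lane case by coinduction, then handle the unbounded part with a Postnikov colimit dualized to a limit and a Milnor sequence.

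For $X\le 0$ the canonical fiber sequence is $H\pi_0X\simeq\tau_{\ge0}\tau_{\le0}X\to\tau_{\le0}X\to\tau_{\le-1}X$. Here $H\pi_0X$ is the fiber, not the cofiber, and there is no natural fiber sequence $\tau_{\le-1}X\to\tau_{\le0}X\to H\pi_0X$.

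Because of the reversal, the vanishing you say is needed fails in general. You ask for $\pi_1\widehat Y=0$ with $Y=\tau_{\le-1}X$, but the Eilenberg--Mac Lane computation gives $\pi_1\widehat Y\cong\widehat{\pi_{-1}X}$. For example, if $X=HA\oplus\Sigma^{-1}HB$ with $A,B$ locally compact, then $\widehat Y\simeq\Sigma H\widehat B$. You noticed this tension in Step 2 and tried to show that a connecting map $\pi_1\widehat{\tau_{\le-1}X}\to\pi_0\widehat{H\pi_0X}$ vanishes. That map comes from a sequence that does not exist, and the factorization through maps $\Sigma^{-1}H\pi_0X\to\Sigma\,\ldots$ is left unfinished. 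The vanishing $\pi_{-1}\widehat{H\pi_0X}=\Ext^1(\pi_0X,\mathbb T)=0$ that you invoke does not bear on a map out of $\pi_1\widehat{\tau_{\le-1}X}$.

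\textbf{The repair.} Dualizing the correct sequence gives a fiber sequence $\widehat{\tau_{\le-1}X}\to\widehat{\tau_{\le0}X}\to\widehat{H\pi_0X}$. So the isomorphism $\pi_0\widehat{\tau_{\le0}X}\cong\pi_0\widehat{H\pi_0X}$ needs exactly $\pi_0\widehat Y=0=\pi_{-1}\widehat Y$, that is, $\widehat Y\in\Cond(\Sp)_{\ge1}$. No connecting map has to be analyzed. Your limit argument at the end of Step 2 proves exactly this, once stated correctly:
\begin{itemize}
\item $Y\simeq\operatorname{colim}_n\tau_{\ge-n}Y$. This is a colimit, not the limit you first wrote. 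It uses that homotopy objects detect equivalences (hypercompleteness), not only that the $t$-structure is compatible with filtered colimits.
\item Hence $\widehat Y\simeq\varprojlim_n\widehat{\tau_{\ge-n}Y}$.
\item Each term is $1$-connective, by induction over the Eilenberg--Mac Lane layers.
\item The $\pi_1$-system is constant for $n\ge1$, since the fibers $\Sigma^{n+1}H\widehat{\pi_{-n-1}X}$ live in degree $\ge2$.
\item So the $\varprojlim^1$ term in degree $0$ vanishes after evaluating at extremally disconnected $S$. It is these test objects, not a general feature of the profinite site, that make the Milnor sequence available.
\end{itemize}

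With this fix your proof becomes essentially the paper's. The paper runs the same tower and Milnor argument directly on $X_N=\tau_{\ge-N}\tau_{\le0}X$, starting from $X_0=H\pi_0X$, and compares $\pi_0\widehat X$ with $\pi_0\widehat{X_0}$ in one step; splitting off $H\pi_0X$ first is equivalent. Your Step 3, identifying $\rho_X$ by naturality along $X\to\tau_{\le0}X\leftarrow H\pi_0X$, matches the paper.
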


We first treat the case where \(X\) is an Eilenberg--Mac Lane spectrum.
\begin{lemma}
\label{lem:eilenberg-maclane-calculation}
For every locally compact abelian group \(A\), there is a natural
equivalence
\begin{equation}
\label{eq:dual-eilenberg-maclane}
    \widehat{HA}\simeq H\widehat A
\end{equation}
whose induced identification on \(\pi_0\) is \(\rho_{HA}\).
Consequently, for every \(n\in\mathbb Z\),
\[
    \widehat{\Sigma^nHA}\simeq\Sigma^{-n}H\widehat A.
\]
\end{lemma}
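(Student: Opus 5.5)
The plan is to combine the enriched restriction--coinduction adjunction with the computation $R(I_{\mathbb T})\simeq H\mathbb T$. Since $HA=U(HA)$ for the Eilenberg--Mac Lane object $HA\in D(\Cond(\Ab))$, \Cref{lem:enriched-restriction-coinduction} gives a natural equivalence
\[
    \widehat{HA}
    =\uHom_{\Cond(\Sp)}(U(HA),I_{\mathbb T})
    \simeq U\bigl(\uHom_{D(\Cond(\Ab))}(HA,R(I_{\mathbb T}))\bigr).
\]
By \Cref{lem:coinduction-of-condensed-dualizing-spectrum} the right-hand side is $U\bigl(\RHom_{\Cond(\Ab)}(A,\mathbb T)\bigr)$. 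Because $A$ is locally compact, \Cref{cor:condab-ext-circle} identifies this derived internal Hom with $\uHom_{\Cond(\Ab)}(A,\mathbb T)[0]=H\widehat A$. This yields \eqref{eq:dual-eilenberg-maclane}. Each step is natural in $A$, since the enriched adjunction equivalence is natural in $M$ and the corollary is natural in its first variable.

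The consequence for shifts is formal. The internal Hom $\uHom_{\Cond(\Sp)}(-,I_{\mathbb T})$ is exact and contravariant, so it sends $\Sigma^n$ to $\Sigma^{-n}$. Hence $\widehat{\Sigma^nHA}\simeq\Sigma^{-n}\widehat{HA}\simeq\Sigma^{-n}H\widehat A$.

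The remaining point, and the main obstacle, is showing that the induced isomorphism $\pi_0\widehat{HA}\cong\widehat A$ coincides with $\rho_{HA}$, which is defined through the evaluation pairing. I would unwind the equivalence on $\pi_0$ in three stages.
\begin{enumerate}
\item Under the adjunction of \Cref{lem:enriched-restriction-coinduction}, the evaluation $\widehat{HA}\otimes HA\to I_{\mathbb T}$ corresponds to the $H(\mathbb Z^{\mathrm{disc}})$-linear evaluation
\[
    \uHom_{D(\Cond(\Ab))}(HA,R(I_{\mathbb T}))\otimes_{H(\mathbb Z^{\mathrm{disc}})}HA\to R(I_{\mathbb T}),
\]
followed by the counit $UR(I_{\mathbb T})\to I_{\mathbb T}$. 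This comes from the chain of equivalences in that proof, with $Z=\widehat{HA}$ and the identity map.
\item Taking $\pi_0$, the linear evaluation becomes the ordinary evaluation $\widehat A\otimes A\to\mathbb T$. This uses the identification $\pi_0R(I_{\mathbb T})\cong\mathbb T$.
\item It then suffices to check that $\pi_0$ of the counit $UR(I_{\mathbb T})\to I_{\mathbb T}$ is the identity of $\mathbb T$, under the identifications $R(I_{\mathbb T})\simeq H\mathbb T$ and $\pi_0I_{\mathbb T}\cong\mathbb T$.
\end{enumerate}
For the last check, the equivalence $H\mathbb T\to R(I_{\mathbb T})$ in \Cref{lem:coinduction-of-condensed-dualizing-spectrum} is the adjoint of the structure map $H\mathbb T\to I_{\mathbb T}$ of the pushout. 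The composite of this adjoint with the counit is therefore that structure map itself. By the proof of \Cref{prop:pi-zero-condensed-pontryagin-dualizing-spectrum}, this structure map induces on $\pi_0$ the map $z\mapsto\varphi(0,z)=z$, which is the identity of $\mathbb T$. The two pairings agree, so the induced map on $\pi_0$ is $\rho_{HA}$.
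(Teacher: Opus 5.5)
Your proposal is correct and follows essentially the same route as the paper. Both arguments combine enriched restriction--coinduction, the equivalence \(H\mathbb T\simeq R(I_{\mathbb T})\) adjoint to the pushout map \(j\colon H\mathbb T\to I_{\mathbb T}\), and the vanishing of higher derived Homs into \(\mathbb T\) from \Cref{cor:condab-ext-circle}. Your three-stage check of the \(\pi_0\)-compatibility spells out the paper's shorter observation that evaluation factors as derived evaluation (via the comparison from the ambient to the relative tensor product) followed by \(j\), where \(\pi_0(j)=\id_{\mathbb T}\).
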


\begin{proof}
Let \(j:H\mathbb T\to I_{\mathbb T}\) be the canonical pushout map.
By the proof of
\Cref{lem:coinduction-of-condensed-dualizing-spectrum}, its adjunct
\[
    \alpha:H\mathbb T\xrightarrow{\sim}R(I_{\mathbb T})
\]
is an equivalence.
Enriched restriction--coinduction, followed by \(\alpha^{-1}\),
therefore gives
\[
\begin{aligned}
    \widehat{HA}
    &\simeq
    U\bigl(\uHom_{D(\Cond(\Ab))}(A[0],R(I_{\mathbb T}))\bigr)\\
    &\simeq
    U\bigl(\mathbf R\uHom_{\Cond(\Ab)}(A,\mathbb T)\bigr)\\
    &\simeq H\widehat A,
\end{aligned}
\]
where the last equivalence is \Cref{cor:condab-ext-circle}.

Under this equivalence, evaluation is the composite
\[
    H\widehat A\otimes HA
    \longrightarrow H\mathbb T
    \xrightarrow{j}I_{\mathbb T}.
\]
The first arrow is induced by derived evaluation, using the
canonical map from the ambient tensor product to the relative
module tensor product. Since \(\pi_0(j)=\id_{\mathbb T}\), the
induced comparison on \(\pi_0\) is precisely \(\rho_{HA}\).
\end{proof}

We now prove the proposition for a general locally compact condensed
spectrum \(X\).

\begin{proof}[Proof of \Cref{prop:univ-prop-dualizing-spec}]
Since \(I_{\mathbb T}\le0\), it follows that
\(\widehat{\tau_{\ge1}X}\le-1\). Dualizing the cofiber sequence
\[
    \tau_{\ge1}X\longrightarrow X\longrightarrow\tau_{\le0}X
\]
therefore gives an isomorphism
\[
    \pi_0\widehat{\tau_{\le0}X}\xrightarrow{\sim}\pi_0\widehat X.
\]
By naturality of evaluation, it suffices to treat \(X\le0\).

For \(N\ge0\), set \(X_N:=\tau_{\ge-N}X\). Then
\(X_0\simeq H\pi_0X\), and there are cofiber sequences
\[
    X_N\longrightarrow X_{N+1}
    \longrightarrow\Sigma^{-N-1}H\pi_{-N-1}X.
\]
By \Cref{lem:eilenberg-maclane-calculation}, their duals are
fiber sequences
\[
    \Sigma^{N+1}H\widehat{\pi_{-N-1}X}
    \longrightarrow\widehat{X_{N+1}}
    \longrightarrow\widehat{X_N}.
\]
Consequently, the maps
\(\pi_0\widehat{X_{N+1}}\to\pi_0\widehat{X_N}\) are isomorphisms
for \(N\ge0\), and the corresponding maps on \(\pi_1\) are
isomorphisms for \(N\ge1\).
The \(\pi_0\)-system is thus canonically constant with value
\[
    \pi_0\widehat{X_0}\cong\widehat{\pi_0X},
\]
and the \(\pi_1\)-system is eventually constant.

Filtered colimits commute with homotopy objects, and homotopy
objects detect equivalences. Hence the canonical map gives
\[
    X\simeq\operatorname*{colim}_{N\ge0}X_N,
    \qquad
    \widehat X\simeq\varprojlim_N\widehat{X_N}.
\]
For an extremally disconnected profinite set \(S\), evaluation
is \(t\)-exact and preserves limits. The Milnor exact sequence
of spectra therefore gives
\[
\begin{aligned}
    0&\longrightarrow
    \varprojlim_N^1\pi_1\bigl(\widehat{X_N}(S)\bigr)
    \longrightarrow\pi_0\bigl(\widehat X(S)\bigr)\\
    &\longrightarrow
    \varprojlim_N\pi_0\bigl(\widehat{X_N}(S)\bigr)
    \longrightarrow0.
\end{aligned}
\]
The first term vanishes by eventual constancy. It follows that
restriction to \(X_0\) induces an isomorphism
\[
    \pi_0\widehat X\xrightarrow{\sim}\pi_0\widehat{X_0},
\]
since evaluations on extremally disconnected test spaces detect
isomorphisms of condensed abelian groups.
Composing with \(\rho_{X_0}\) gives
\[
    \pi_0\widehat X\xrightarrow{\sim}\widehat{\pi_0X}.
\]
By naturality of evaluation and the evaluation compatibility in
\Cref{lem:eilenberg-maclane-calculation}, this composite is
\(\rho_X\). This proves the proposition.
\end{proof}

\begin{corollary}[The homotopy formula]
\label{cor:condensed-brown-comenetz-homotopy-formula}
For every \(X\in\Cond(\Sp)_{\mathrm{lc}}\) and every \(n\in\mathbb{Z}\),
Proposition~\ref{prop:univ-prop-dualizing-spec}, applied to \(\Sigma^nX\), gives a natural isomorphism
\[
  \rho_{X,n}:
  \pi_n\widehat X
  \xrightarrow{\ \sim\ }
  \widehat{\pi_{-n}X},
\]
In particular, \(\widehat X\in\Cond(\Sp)_{\mathrm{lc}}\).
\end{corollary}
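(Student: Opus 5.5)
The plan is to reduce the statement to \Cref{prop:univ-prop-dualizing-spec} by shifting. Fix \(X\in\Cond(\Sp)_{\mathrm{lc}}\) and \(n\in\mathbb{Z}\).

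\textbf{Step 1: \(\Sigma^nX\) is again locally compact.} We have \(\pi_k(\Sigma^nX)\cong\pi_{k-n}X\), so every homotopy group of \(\Sigma^nX\) is locally compact. Hence \Cref{prop:univ-prop-dualizing-spec} applies to \(\Sigma^nX\) and gives an isomorphism
\[
\rho_{\Sigma^nX}\colon\pi_0\widehat{\Sigma^nX}\xrightarrow{\ \sim\ }\widehat{\pi_0\Sigma^nX}=\widehat{\pi_{-n}X}.
\]

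\textbf{Step 2: identify the source with \(\pi_n\widehat X\).} The internal Hom is exact in its first variable, so there is a canonical equivalence
\[
\uHom_{\Cond(\Sp)}(\Sigma^nX,I_{\mathbb T})\simeq\Sigma^{-n}\uHom_{\Cond(\Sp)}(X,I_{\mathbb T}).
\]
Taking \(\pi_0\) gives \(\pi_0\widehat{\Sigma^nX}\cong\pi_0\Sigma^{-n}\widehat X=\pi_n\widehat X\). We define \(\rho_{X,n}\) as the composite of this identification with \(\rho_{\Sigma^nX}\).

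\textbf{Step 3: naturality.} Naturality in \(X\) follows from two facts. First, \(\rho_{\Sigma^nX}\) is built from the evaluation pairing, which is natural. Second, the shift equivalence in Step 2 is natural in \(X\). The only bookkeeping needed is to check that the suspension isomorphisms on the \(\pi_0\)-side and on the \(\pi_{-n}\)-side are compatible with the evaluation pairing, possibly up to a sign. A sign would not affect the claim that \(\rho_{X,n}\) is an isomorphism.

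\textbf{Step 4: \(\widehat X\) is locally compact.} By Steps 1--3, each \(\pi_n\widehat X\) is isomorphic to \(\widehat{\pi_{-n}X}\), the Pontryagin dual of a locally compact abelian group. As recalled in the introduction, the condensed internal Hom into \(\mathbb T\) agrees with the compact-open Pontryagin dual (\cite[Proposition~4.2]{ScholzeCondensed2026}). By Pontryagin duality that dual is again locally compact. Hence \(\widehat X\in\Cond(\Sp)_{\mathrm{lc}}\).

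The main (mild) obstacle is Step 4: making sure that \(\widehat{\pi_{-n}X}\) is \emph{represented} by a locally compact group as a condensed abelian group, rather than merely having locally compact underlying topology. This is exactly the cited identification of the condensed internal Hom with the compact-open topology, which applies because locally compact groups are compactly generated Hausdorff.
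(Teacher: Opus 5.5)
Your proposal is correct and follows the paper's argument: the paper obtains \(\rho_{X,n}\) by applying \Cref{prop:univ-prop-dualizing-spec} to \(\Sigma^nX\) and using \(\widehat{\Sigma^nX}\simeq\Sigma^{-n}\widehat X\), and local compactness of \(\widehat X\) comes, as you say, from identifying \(\uHom_{\Cond(\Ab)}(A,\mathbb T)\) with the compact-open Pontryagin dual and then applying Pontryagin duality. The possible sign you flag in Step 3 is harmless for this statement, since it affects neither bijectivity nor naturality.
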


\subsection{Compatibility with biduality}
\begin{definition}[Pontryagin and condensed Brown--Comenetz duality]
For a condensed abelian group \(A\), define
\[
    \widehat{A}
    :=
    \uHom_{\Cond(\Ab)}
    \left(A,\mathbb{T}\right).
\]
Let
\[
    \operatorname{ev}_A:
    \widehat{A}\otimes A
    \longrightarrow
    \mathbb{T}
\]
be the evaluation morphism. The Pontryagin biduality morphism
\[
    d_A:
    A
    \longrightarrow
    \widehat{\widehat{A}}
\]
is the adjunct of
\[
    A\otimes\widehat{A}
    \xrightarrow{\ \sigma_{A,\widehat{A}}\ }
    \widehat{A}\otimes A
    \xrightarrow{\ \operatorname{ev}_A\ }
    \mathbb{T},
\]
where \(\sigma\) denotes the symmetry.

For a condensed spectrum \(X\), define
\[
    \widehat{X}
    :=
    \uHom_{\Cond(\Sp)}
    \left(X,I_{\mathbb{T}}\right).
\]
Let
\[
    \operatorname{ev}_X:
    \widehat{X}\otimes X
    \longrightarrow
    I_{\mathbb{T}}
\]
be the evaluation morphism. The condensed Brown--Comenetz biduality
morphism
\[
    d_X:
    X
    \longrightarrow
    \widehat{\widehat{X}}
\]
is the adjunct of
\[
    X\otimes\widehat{X}
    \xrightarrow{\ \sigma_{X,\widehat{X}}\ }
    \widehat{X}\otimes X
    \xrightarrow{\ \operatorname{ev}_X\ }
    I_{\mathbb{T}}.
\]
\end{definition}

\begin{theorem}[Compatibility with biduality]
\label{thm:dualization_spec}
Let \(X\in\Cond(Sp)_{\mathrm{lc}}\).  For every \(n\in\mathbb{Z}\), the following
diagram of condensed abelian groups commutes:
\[
\begin{tikzcd}[column sep=large,row sep=large]
\pi_nX
  \arrow[r,"\pi_n(d_X)"]
  \arrow[d,"{(-1)^n d_{\pi_nX}}"']
& \pi_n\widehat{\widehat X}
  \arrow[d,"\rho_{\widehat X,n}"]\\
\widehat{\widehat{\pi_nX}}
  \arrow[r,"{\widehat{\rho_{X,-n}}}"']
& \widehat{\pi_{-n}\widehat X}.
\end{tikzcd}
\]
\end{theorem}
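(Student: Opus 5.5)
We reduce the commutativity of the square to a statement about pairings into \(\mathbb T\). Both composites are maps \(\pi_nX\to\widehat{\pi_{-n}\widehat X}=\uHom_{\Cond(\Ab)}(\pi_{-n}\widehat X,\mathbb T)\). By the tensor--Hom adjunction in \(\Cond(\Ab)\), each is determined by its adjunct pairing \(\pi_nX\otimes\pi_{-n}\widehat X\to\mathbb T\). So it suffices to identify the two pairings, and the plan is to show that both equal the pairing obtained from \(\operatorname{ev}_X\) on homotopy.

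\textbf{Step 1: unwind the definitions of \(\rho_{X,n}\).} By \Cref{prop:univ-prop-dualizing-spec} and \Cref{cor:condensed-brown-comenetz-homotopy-formula}, \(\rho_{X,-n}\) is \(\rho_{\Sigma^{-n}X}\) transported along \(\widehat{\Sigma^{-n}X}\simeq\Sigma^{n}\widehat X\). Its adjunct is therefore the pairing
\[
\pi_{-n}\widehat X\otimes\pi_nX\longrightarrow\pi_0(\widehat X\otimes X)\xrightarrow{\pi_0(\operatorname{ev}_X)}\pi_0I_{\mathbb T}\cong\mathbb T,
\]
where the first arrow is the graded exterior product \(\pi_a\otimes\pi_b\to\pi_{a+b}\) (sheafified). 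Once this is written down, the lower composite \(\widehat{\rho_{X,-n}}\circ(-1)^nd_{\pi_nX}\) has adjunct \((-1)^n\) times this pairing precomposed with the symmetry of \(\Cond(\Ab)\). This uses the definition of \(d_{\pi_nX}\) as the adjunct of \(\operatorname{ev}\circ\sigma\), together with naturality of the adjunction in the first variable.

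\textbf{Step 2: the upper composite.} \(\rho_{\widehat X,n}\circ\pi_n(d_X)\) has adjunct
\[
\pi_nX\otimes\pi_{-n}\widehat X\xrightarrow{\pi_n(d_X)\otimes\id}\pi_n\widehat{\widehat X}\otimes\pi_{-n}\widehat X\to\pi_0(\widehat{\widehat X}\otimes\widehat X)\xrightarrow{\operatorname{ev}_{\widehat X}}\mathbb T.
\]
Naturality of the exterior product in each variable lets us move \(d_X\) inside, giving \(\pi_0(\operatorname{ev}_{\widehat X}\circ(d_X\otimes\id))\) applied after \(\pi_nX\otimes\pi_{-n}\widehat X\to\pi_0(X\otimes\widehat X)\). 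By the defining adjunction of \(d_X\), we have \(\operatorname{ev}_{\widehat X}\circ(d_X\otimes\id)=\operatorname{ev}_X\circ\sigma_{X,\widehat X}\). Hence the upper pairing is \(\pi_0(\operatorname{ev}_X)\circ\pi_0(\sigma)\circ\mu_{n,-n}\), where \(\mu\) is the exterior product.

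\textbf{Step 3: the sign.} This is the main point. We must show that \(\pi_0(\sigma_{X,\widehat X})\circ\mu_{n,-n}=(-1)^{n\cdot(-n)}\,\mu_{-n,n}\circ\sigma^{\Cond(\Ab)}\), and note that \((-1)^{n^2}=(-1)^n\). This is the graded commutativity of the exterior product in a stable symmetric monoidal \(\infty\)-category with compatible \(t\)-structure. The sign comes from the swap \(\Sigma^n\mathbb S\otimes\Sigma^{-n}\mathbb S\to\Sigma^{-n}\mathbb S\otimes\Sigma^n\mathbb S\), which acts by the degree \((-1)^{n\cdot n}\) on spheres. Since \((-)^{\mathrm{disc}}\) is symmetric monoidal, the sign is pulled back from \(\Sp\).

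The obstacle is to make this rigorous at the level of sheafified homotopy groups. The plan is to first verify it on presheaf sections \(\pi_nX(S)\otimes\pi_{-n}\widehat X(S)\), where it is the classical sign rule for the smash product of spectra; we use that \(\otimes\) in \(\Cond(\Sp)\) is sheafification of the sectionwise smash product. Then sheafify, since the exterior product on \(\pi_*\) is induced from the presheaf one. With the signs matched, the two pairings agree, and the square commutes.
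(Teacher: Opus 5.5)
Your proposal is correct and follows the paper's proof essentially step for step. You pass to adjunct pairings into $\mathbb{T}$, identify the lower composite's adjunct as $\lambda_{n,X}\circ\sigma$ (up to the sign), move $d_X$ inside the exterior product and use $\operatorname{ev}_{\widehat{X}}\circ(d_X\otimes\id)=\operatorname{ev}_X\circ\sigma_{X,\widehat{X}}$, and finally commute the exterior product past the swap with Koszul sign $(-1)^{n\cdot(-n)}=(-1)^n$; the paper only asserts that last sign rule, whereas you justify it sectionwise for ordinary spectra and then sheafify, which is slightly more explicit.
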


\begin{proof}

Let
\[
  \lambda_{n,X}:
  \pi_{-n}\widehat X\otimes\pi_nX\longrightarrow \mathbb{T}
\]
be the pairing induced by
\(\operatorname{ev}_X:\widehat X\otimes X\to I_{\mathbb{T}}\).  Explicitly,
\[
  \lambda_{n,X}
  =
  \left(
  \pi_{-n}\widehat X\otimes\pi_nX
  \longrightarrow
  \pi_0(\widehat X\otimes X)
  \xrightarrow{\pi_0(\operatorname{ev}_X)}
  \pi_0I_{\mathbb{T}}\cong \mathbb{T}
  \right).
\]
By the construction in \Cref{prop:univ-prop-dualizing-spec}, the comparison
\[
  \rho_{X,-n}:\pi_{-n}\widehat X\longrightarrow\widehat{\pi_nX}
\]
is the adjunct of \(\lambda_{n,X}\).

We compare the two composites in the theorem after applying the tensor--Hom
adjunction
\[
\begin{aligned}
&\Hom_{\Cond(\Ab)}
(\pi_nX,\widehat{\pi_{-n}\widehat X})
\cong
\Hom_{\Cond(\Ab)}
(\pi_nX\otimes\pi_{-n}\widehat X,\mathbb{T}).
\end{aligned}
\]
Recalling that $\widehat{\rho_{X,-n}}=(\rho_{X,-n})^*$, by the naturality of the middle argument in the tensor-hom adjunction we obtain a factorization of the adjunct of
\(\widehat{\rho_{X,-n}}\circ d_{\pi_nX}\) into

\[\begin{tikzcd}
	{\pi_nX \otimes \pi_{-n}\widehat{X}} && \\
	{\pi_nX \otimes \widehat{\pi_{n}X}} && {\mathbb{T}}
	\arrow["{\id\otimes\rho_{X,-n}}"', from=1-1, to=2-1]
	\arrow[from=1-1, to=2-3]
	\arrow["{ev_{\pi_nX}\circ\sigma}"', from=2-1, to=2-3]
\end{tikzcd}\]

where $ev_{\pi_nX}\circ\sigma$ is exactly the adjunct of $d_{\pi_nX}$. By commuting $\sigma$ with $\id\otimes \rho_{X,-n}$ we obtain:

\[
  \pi_nX\otimes\pi_{-n}\widehat X
  \xrightarrow{\sigma}
  \pi_{-n}\widehat X\otimes\pi_nX
  \xrightarrow{\rho_{X,-n}\otimes\id}
  \widehat{\pi_nX}\otimes\pi_nX
  \xrightarrow{ev_{\pi_nX}}\mathbb{T}
\]

And finally, because $ev_{\pi_nX}$ is the counit of $-\otimes\pi_nX\dashv \uHom_{\Cond(\Ab)}(\pi_nX,-)$:

\[
    ev_{\pi_nX}\circ(\rho_{X,-n}\otimes\id_{\pi_nX})=\lambda_{n,X}
\]

so the adjunct of \(\widehat{\rho_{X,-n}}\circ d_{\pi_nX}\) is:

\[
  \pi_nX\otimes\pi_{-n}\widehat X
  \xrightarrow{\sigma}
  \pi_{-n}\widehat X\otimes\pi_nX
  \xrightarrow{\lambda_{n,X}}\mathbb{T}
\]

For the other composite, the adjunct of $\rho_{\widehat{X},n}\circ\pi_n(d_X)$ is:

\[\begin{tikzcd}
	{\pi_n X\otimes\pi_{-n}\widehat{X}} && {\pi_n\widehat{\widehat{X}} \otimes\pi_{-n}\widehat{X}} \\
	&& {\mathbb{T}}
	\arrow["{\pi_n(d_X)\otimes\id}", from=1-1, to=1-3]
	\arrow["{\lambda_{\widehat{X},-n} }", from=1-3, to=2-3]
\end{tikzcd}\]

where
\[
  \lambda_{\widehat{X},-n}  
  =
  \left(
  \pi_{n}\widehat{\widehat{X}}\otimes\pi_{-n}\widehat{X}
  \longrightarrow
  \pi_0(\widehat{\widehat{X}}\otimes \widehat{X})
  \xrightarrow{\pi_0(\operatorname{ev}_{\widehat{X}})}
  \pi_0I_{\mathbb{T}}\cong \mathbb{T}
  \right)
\]

is the left adjunct of $\rho_{\widehat{X},n}$.

We can expand $\lambda_{\widehat{X},-n} $, and by the naturality of the kunneth map, get the following commutative diagram:

\[\begin{tikzcd}
	{\pi_nX\otimes\pi_{-n}\widehat{X}} && { \pi_n\widehat{\widehat{X}} \otimes\pi_{-n}\widehat{X}} & \\
	{\pi_0(X\otimes \widehat{X})} && {\pi_0(\widehat{\widehat{X}}\otimes \widehat{X})} & {\mathbb{T}}
	\arrow["{\pi_n(d_X)\otimes\id}", from=1-1, to=1-3]
	\arrow[from=1-1, to=2-1]
	\arrow[from=1-3, to=2-3]
	\arrow["{\pi_0(d_X\otimes\id_{\widehat{X}})}"', from=2-1, to=2-3]
	\arrow["{\pi_0(ev_{\widehat{X}})}"', from=2-3, to=2-4]
\end{tikzcd}\]

The bottom row is exactly $\pi_0$ of the left adjunct of \(d_X\) in $\Cond(\Sp)$, which by definition is exactly:

\[
    X\otimes\widehat{X}
    \xrightarrow{\ \sigma_{X,\widehat{X}}\ }
    \widehat{X}\otimes X
    \xrightarrow{\ \operatorname{ev}_X\ }
    I_{\mathbb{T}}.
\].

Expanding the formula for the adjoint of $\rho_{\widehat{X},n}\circ\pi_n(d_X)$ we obtain:

\[
    \pi_nX\otimes\pi_{-n}\widehat{X}
    \longrightarrow
    \pi_0(X\otimes\widehat{X})
    \xrightarrow{\pi_0(\sigma_{X,\widehat{X}})}
    \pi_0(\widehat{X}\otimes X)
    \xrightarrow{\pi_0(\operatorname{ev}_X)}
    I_{\mathbb{T}}.
\].

Finally, the Kunneth map can be commuted past the swap map $\pi_0(\sigma_{X,\widehat{X}})$, while introducing the Koszul sign:
\[
  (-1)^{n(-n)}=(-1)^{n^2}=(-1)^n.
\]

Hence we obtain:

\[
    \pi_nX\otimes\pi_{-n}\widehat{X}
    \xrightarrow{(-1)^n\cdot\sigma_{\pi_nX,\pi_{-n}\widehat{X}}}
    \pi_{-n}\widehat{X}\otimes\pi_nX
    \longrightarrow
    \pi_0(\widehat{X}\otimes X)
    \xrightarrow{\pi_0(\operatorname{ev}_X)}
    I_{\mathbb{T}}.
\]

The last two maps are exactly $\lambda_{n,X}$, so this is precisely the adjoint of $\widehat{\rho_{X,-n}}\circ d_{\pi_nX}$, multiplied by the Koszul sign.

Since tensor--Hom adjunction is an isomorphism on Hom-sets, the two morphisms
are equal.  This proves the commutativity of the diagram.
\end{proof}

\begin{theorem}[Condensed Brown--Comenetz--Pontryagin duality]
\label{thm:condensed-brown-comenetz-duality}
For every locally compact condensed spectrum
\[
    X\in\Cond(\Sp)_{\mathrm{lc}},
\]
the canonical biduality morphism
\[
    d_X:
    X
    \longrightarrow
    \widehat{\widehat{X}}
\]
is an equivalence.

Moreover, condensed Brown--Comenetz dualization restricts to a
contravariant equivalence
\[
    \widehat{(-)}:
    \Cond(\Sp)_{\mathrm{lc}}^{\mathrm{op}}
    \xrightarrow{\ \sim\ }
    \Cond(\Sp)_{\mathrm{lc}},
\]
whose quasi-inverse is itself. In particular, it defines an
involutive duality on the category of locally compact condensed
spectra.
\end{theorem}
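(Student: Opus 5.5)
The plan is to check that \(d_X\) is an equivalence on homotopy objects and then conclude formally. Since \(\Cond(\Sp)\) is built from hypersheaves, homotopy objects detect equivalences. So it suffices to show that \(\pi_n(d_X)\colon \pi_nX\to\pi_n\widehat{\widehat X}\) is an isomorphism of condensed abelian groups for every \(n\in\mathbb Z\).

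First, by \Cref{cor:condensed-brown-comenetz-homotopy-formula}, \(\widehat X\) again lies in \(\Cond(\Sp)_{\mathrm{lc}}\). Its homotopy groups are \(\pi_{-n}\widehat X\cong\widehat{\pi_nX}\), which are locally compact by classical Pontryagin duality, since the condensed internal Hom into \(\mathbb T\) recovers the compact-open dual \cite[Proposition~4.2]{ScholzeCondensed2026}. Hence \(\rho_{\widehat X,n}\) is defined and is an isomorphism, again by \Cref{cor:condensed-brown-comenetz-homotopy-formula} applied to \(\widehat X\).

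Next we invoke \Cref{thm:dualization_spec}. In its commutative square, the right vertical map \(\rho_{\widehat X,n}\) is an isomorphism, as just shown. The bottom map \(\widehat{\rho_{X,-n}}\) is the Pontryagin dual of an isomorphism, hence an isomorphism. The left vertical map \((-1)^n d_{\pi_nX}\) is an isomorphism by Pontryagin biduality for the locally compact group \(\pi_nX\), in its condensed formulation \cite[Theorem~4.1]{ScholzeCondensed2026}. Commutativity of the square then forces \(\pi_n(d_X)\) to be an isomorphism for every \(n\), so \(d_X\) is an equivalence.

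For the second statement, we regard \(\widehat{(-)}\) as the functor \(\uHom_{\Cond(\Sp)}(-,I_{\mathbb T})\colon\Cond(\Sp)^{\mathrm{op}}\to\Cond(\Sp)\). By the first step it preserves \(\Cond(\Sp)_{\mathrm{lc}}\). The biduality maps \(d_X\) assemble into a natural transformation \(\id\Rightarrow\widehat{\widehat{(-)}}\), namely the unit of the self-adjunction of \(\uHom(-,I_{\mathbb T})\) with itself on the right. We have shown this unit is an equivalence on the lc subcategory. Applying the same reasoning to the opposite side, where the counit is the unit for the opposite category, shows that \(\widehat{(-)}\) is an equivalence with itself as quasi-inverse.

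The main obstacle is the point I expect to require the most care: identifying \(d_{\pi_nX}\) with the condensed incarnation of classical Pontryagin biduality. One has to check that the condensed \(\widehat{\widehat A}\) agrees with the topological double dual, and that the adjunct-of-evaluation map matches the topological evaluation map. The rest is formal: naturality of \(d\), and the fact that the sign \((-1)^n\) does not affect invertibility.
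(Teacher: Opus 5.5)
Your proposal is correct and follows the paper's route. You reduce to homotopy objects and note that three sides of the compatibility square are isomorphisms: the right vertical map (by the homotopy formula applied to $\widehat X$, which is again locally compact), the bottom map (the dual of an isomorphism), and the left vertical map (Pontryagin biduality up to the sign $(-1)^n$); this forces $\pi_n(d_X)$ to be an isomorphism, and the natural equivalence $d\colon \id\simeq\widehat{\widehat{(-)}}$ then makes $\widehat{(-)}$ its own quasi-inverse, exactly as in the paper. Your description of $d$ as both the unit and the counit of the self-adjunction of $\uHom_{\Cond(\Sp)}(-,I_{\mathbb T})$ simply spells out the paper's final ``it follows'' step.
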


\begin{proof}
As established in \Cref{cor:condensed-brown-comenetz-homotopy-formula}, Brown--Comenetz dualization preserves locally compact condensed spectra.

By \Cref{thm:dualization_spec}, the following diagram commutes:
\[
\begin{tikzcd}[column sep=large,row sep=large]
\pi_nX
\arrow[r,"\pi_n(d_X)"]
\arrow[d,"{(-1)^n d_{\pi_nX}}"']
&
\pi_n\widehat{\widehat{X}}
\arrow[d,"\rho_{\widehat{X},n}"]
\\
\widehat{\widehat{\pi_nX}}
\arrow[r,"{\widehat{\rho_{X,-n}}}"']
&
\widehat{\pi_{-n}\widehat{X}}.
\end{tikzcd}
\]

We examine the remaining morphisms in the diagram. By
\Cref{cor:condensed-brown-comenetz-homotopy-formula}, the morphism
\[
    \rho_{\widehat{X},n}:
    \pi_n\widehat{\widehat{X}}
    \xrightarrow{\ \sim\ }
    \widehat{\pi_{-n}\widehat{X}}
\]
is an isomorphism. Likewise,
\[
    \rho_{X,-n}:
    \pi_{-n}\widehat{X}
    \xrightarrow{\ \sim\ }
    \widehat{\pi_nX}
\]
is an isomorphism, and hence its dual
\[
    \widehat{\rho_{X,-n}}:
    \widehat{\widehat{\pi_nX}}
    \xrightarrow{\ \sim\ }
    \widehat{\pi_{-n}\widehat{X}}
\]
is an isomorphism.

Finally, since \(\pi_nX\) is a locally compact abelian group, the
Pontryagin--van Kampen duality theorem implies that its biduality
morphism
\[
    d_{\pi_nX}:
    \pi_nX
    \xrightarrow{\ \sim\ }
    \widehat{\widehat{\pi_nX}}
\]
is an isomorphism. Multiplication by \((-1)^n\) is an automorphism, so
\[
    (-1)^n d_{\pi_nX}
\]
is also an isomorphism.

Consequently, the commutative diagram gives
\[
    \pi_n(d_X)
    =
    \rho_{\widehat{X},n}^{-1}
    \circ
    \widehat{\rho_{X,-n}}
    \circ
    (-1)^n d_{\pi_nX}.
\]
It follows that
\[
    \pi_n(d_X):
    \pi_nX
    \xrightarrow{\ \sim\ }
    \pi_n\widehat{\widehat{X}}
\]
is an isomorphism for every \(n\in\mathbb{Z}\).

A morphism of condensed spectra is an equivalence if and
only if it induces an isomorphism on every homotopy group. Hence
\[
    d_X:
    X
    \xrightarrow{\ \sim\ }
    \widehat{\widehat{X}}
\]
is an equivalence.

We thus obtain a contravariant functor
\[
    \widehat{(-)}:
    \Cond(\Sp)_{\mathrm{lc}}^{\mathrm{op}}
    \longrightarrow
    \Cond(\Sp)_{\mathrm{lc}}
\]
together with a natural equivalence
\[
    d:
    \id_{\Cond(\Sp)_{\mathrm{lc}}}
    \xrightarrow{\ \sim\ }
    \widehat{\widehat{(-)}}.
\]
It follows that \(\widehat{(-)}\) is an equivalence whose
quasi-inverse is itself.
\end{proof}
\begin{corollary}[Biduality for ordinary spectra]
\label{cor:biduality-ordinary-spectra}
For every spectrum \(E\), the canonical map
\[
    E^{\mathrm{disc}}
    \longrightarrow
    \widehat{\widehat{E^{\mathrm{disc}}}}
\]
is an equivalence of condensed spectra.
\end{corollary}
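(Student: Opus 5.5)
The plan is to deduce the corollary directly from \Cref{thm:condensed-brown-comenetz-duality}, by checking that \(E^{\mathrm{disc}}\) lies in \(\Cond(\Sp)_{\mathrm{lc}}\).

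\emph{Step 1: homotopy groups.} For a spectrum \(E\) we have, for every \(n\in\mathbb Z\),
\[
    \pi_n\bigl(E^{\mathrm{disc}}\bigr)\cong\bigl(\pi_nE\bigr)^{\mathrm{disc}},
\]
which was recalled after the definition of discrete condensed spectra.

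\emph{Step 2: local compactness.} Each abstract abelian group \(\pi_nE\), with the discrete topology, is a locally compact Hausdorff abelian group. Its associated condensed abelian group \(\underline{\pi_nE}\) is, by definition, the sheaf \(S\mapsto\operatorname{Cont}(S,\pi_nE)\), and this is exactly \((\pi_nE)^{\mathrm{disc}}\). Hence every \(\pi_n(E^{\mathrm{disc}})\) is represented by a locally compact abelian group, so \(E^{\mathrm{disc}}\in\Cond(\Sp)_{\mathrm{lc}}\).

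\emph{Step 3: apply the theorem.} \Cref{thm:condensed-brown-comenetz-duality} now says that \(d_{E^{\mathrm{disc}}}\) is an equivalence. The "canonical map" in the corollary is this biduality morphism \(d_{E^{\mathrm{disc}}}\).

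There is no genuine obstacle, since no finiteness assumption on \(\pi_nE\) is required. The only point needing care is the identification in Step 2 of the discrete condensation \((-)^{\mathrm{disc}}\) with \(\underline{(-)}\) applied to the discrete topological group. This holds because continuous maps from a profinite (compact) set \(S\) to a discrete space are exactly the locally constant maps, which is the formula defining \(M^{\mathrm{disc}}\). It may also be worth remarking that the double dual is again discrete: \(\widehat{E^{\mathrm{disc}}}\) has compact homotopy groups \(\widehat{(\pi_{-n}E)^{\mathrm{disc}}}\), and dualizing once more returns the discrete groups, consistent with the equivalence.
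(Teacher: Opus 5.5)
Your proposal is correct and follows the paper's proof: both note that \(\pi_n(E^{\mathrm{disc}})\cong(\pi_nE)^{\mathrm{disc}}\) is a discrete, hence locally compact, abelian group, so \(E^{\mathrm{disc}}\in\Cond(\Sp)_{\mathrm{lc}}\), and then apply \Cref{thm:condensed-brown-comenetz-duality}. Your extra check that \((-)^{\mathrm{disc}}\) agrees with \(\underline{(-)}\) on discrete groups is a harmless detail that the paper leaves implicit.
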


\begin{proof}
The homotopy groups of \(E^{\mathrm{disc}}\) are the discrete
condensed abelian groups \((\pi_nE)^{\mathrm{disc}}\), which are
locally compact. The claim therefore follows from
\Cref{thm:condensed-brown-comenetz-duality}.
\end{proof}

\begin{remark}
For example, condensed duality gives
\[
    \widehat{H(\mathbb{Z}^{\mathrm{disc}})}
    \simeq H\mathbb{T},
    \qquad
    \widehat{H\mathbb{T}}
    \simeq H(\mathbb{Z}^{\mathrm{disc}}).
\]
By comparison, the classical Brown--Comenetz double dual of
\(H\mathbb{Z}\) is \(H(\prod_p\mathbb{Z}_p)\), with biduality
map induced by profinite completion. Retaining the topology
on the character group restores the original spectrum.
\end{remark}

\newpage

\bibliographystyle{alphaurl} 
\bibliography{meta/references} 

\end{document}